\documentclass[11pt]{amsart}

\usepackage{color,graphicx,enumerate,wrapfig,amssymb}
\usepackage{subfigure}
\usepackage{eucal}

\usepackage{setspace}

\usepackage{hyperref}

\hypersetup{
    bookmarks=true,         
    unicode=true,          
    pdftoolbar=true,        
    pdfmenubar=true,        
    pdffitwindow=false,     
    pdfstartview={FitH},    
    pdftitle={My title},    
    pdfauthor={Author},     
    pdfsubject={Subject},   
    pdfcreator={Creator},   
    pdfproducer={Producer}, 
    pdfkeywords={keyword1} {key2} {key3}, 
    pdfnewwindow=true,      
    colorlinks=true,       
    linkcolor=blue,          
    citecolor=blue,        
    filecolor=magenta,      
    urlcolor=cyan           
}

\renewenvironment{proof}[1][\proofname ]{{\noindent \bfseries #1. }}{\qed \bigskip }

\newcommand{\R}{{\mathbb R}}
\newcommand{\Z}{{\mathbb Z}}

\newcommand{\e}{\varepsilon}

\newtheorem{theorem}{Theorem}[section]

\newtheorem{lem}[theorem]{Lemma}

\newtheorem{prop}[theorem]{Proposition}
\newtheorem{remark}[theorem]{Remark}


\textwidth=15cm
\textheight=23cm
\parindent=16pt
\oddsidemargin=1cm
\evensidemargin=1cm
\topmargin=-0.5cm

\numberwithin{equation}{section}

\title[Oscillatory integrals operators]{$L^2$-estimates for singular oscillatory integral operators}

\author{Hayk Aleksanyan}
\address{School of Mathematics, The University of Edinburgh, JCMB The King's Buildings, Peter Guthrie Tait Road, Edinburgh EH9 3FD}
\email{hayk.aleksanyan@gmail.com}

\author[Henrik Shahgholian ]{Henrik Shahgholian}
\address{Department of Mathematics, KTH Royal Institute of Technology,
  100~44  Stockholm, Sweden}
\email{henriksh@kth.se}

\author{Per Sj\"{o}lin}

\address{Department of Mathematics, KTH Royal Institute of Technology,
  100~44  Stockholm, Sweden}
\email{persj@kth.se}

\begin{document}
    \begin{abstract}

      In this note we study singular oscillatory integrals with linear phase function over hypersurfaces which may oscillate,
      and prove estimates of $L^2 \mapsto L^2$ type for the operator, as well as for the corresponding maximal function.
      If the hypersurface is flat, we consider a particular class of a nonlinear phase functions, and apply
      our analysis to the eigenvalue problem associated with the Helmholtz equation in $\mathbb{R}^3$.

\bigskip
\noindent \textbf{Keywords.} Singular integral; oscillating surface; maximal operator; Helmholtz equation

\vspace{0.2cm}
\noindent \textbf{MSC 2010.} 47G10, 42B20, 42B25, 35J05

    \end{abstract}

\maketitle


\section{Introduction}\label{sec-intro}

In their seminal work, D.H. Phong and E.M. Stein \cite{Phong-Stein} study new classes of oscillatory integral operators
with singular weights, that in turn apply to the study of certain PDE problems.
Since this work appeared, there have been numerous  results 
and developments of the theory, with large variety of these type of operators.
In this note we will be interested in two particular aspects of the result of Phong-Stein.
Apart from \cite{Phong-Stein}, our interest is
partially motivated by applications of oscillatory integral operators in the anaysis of boundary value problems 
(see for example \cite{ASS1}-\cite{ASS3}), and recent developments in PDE problems involving oscillating boundaries (see for example \cite{KP}).
In the first part of the note we will introduce and study oscillatory integral operators
with singular kernel and linear phase, where integration is carried out on
smooth hypersurfaces. Here we will aim at obtaining precise estimates with respect to the smoothness norm of the
hypersurface. This is done in order to obtain non trivial bounds when one allows 
the hypersurface to oscillate. A type of an oscillating hypersurface considered here
is technically smooth, however due to its oscillatory nature one can not rely in a straightforward manner on partial integration techniques
to bound the corresponding integral operator,
since the derivatives of the graph representing the surface blow-up. 
In the second part of the paper, we will study a similar problem but with nonlinear phase function.
The type of phase function studied in this case is in part motivated by the Helmholtz equation,
and we will discuss one particular application of our results regarding decay estimates
for the eigenvalue problem corresponding to the Helmholtz equation in $\R^3$.

\vspace{0.2cm}
Let us start by fixing the setup and notation.
Throughout the text, by $``\cdot"$ we denote the standard scalar product in $\R^n$.
For two quantities $x$ and $y$ we write $x \lesssim y$ if there is an absolute constant $C$
for which $x\leq C y$. Likewise, if $x$ and $y$ depend on some parameter, say $\delta$, we may write
$x \lesssim_\delta y$ to indicate that the constant in the inequality depends on $\delta$,
and is otherwise absolute.

\vspace{0.2cm}

For a real-valued function $\psi\in C^\infty(\R^n)$ satisfying $|| D^\alpha \psi||_{L^\infty(\R^n)} \lesssim_\alpha 1$
for any $\alpha \in \Z^n_+$, let $\Gamma$ be the following hypersurface 
\begin{equation}\label{Gamma-surface}
\Gamma=\{ (y,\psi(y))\in \R^{n+1}: \ y\in \R^n \}.
\end{equation}
For $f\in L^2(\R^n)$, $\lambda>0$, and $(x,x_{n+1})\in \R^n\times \R$ define
\begin{equation}\label{def-T-lambda}
 T_\lambda  f(x, x_{n+1}) = \int\limits_{\Gamma } e^{ i\lambda x \cdot y   }
 \varphi_0( (x,x_{n+1}) ,y ) K(x-y, x_{n+1} - y_{n+1}) f(y) d \sigma (y,y_{n+1}),
\end{equation}
where  $ d\sigma $ is the induced surface measure on $\Gamma$, and

\begin{doublespace}
\begin{itemize}
 \item[(A1)]\label{(A1)} $ \varphi_0 $ is real-valued and from the class $C_0^\infty ( \R^{n+1} \times \R^n )$,
 \item[(A2)]\label{(A2)} $K\in C^\infty ( \R^{n+1} \setminus \{0\} )$ and for any $z\in \R^{n+1}\setminus \{0\}$ and any $\alpha \in \Z^{n+1}_+$ we have
 $$ | D^\alpha K(z) | \lesssim_\alpha \frac{|z|^m}{ |z|^{n + |\alpha|} }, $$ 
 where $0\leq m<n$. Here $n\geq 1$, and we do not assume that $m$ is necessarily an integer.
\end{itemize}
\end{doublespace}

\noindent \textbf{Organization.} In Section \ref{sec-est-of-T-lambda} we study 
$T_\lambda$ as an operator from $L^2( \R^{n} )$ to $L^2(\R^{n} )$ and prove decay estimates for its operator norm as $\lambda \to \infty$.
A special attention is paid to obtaining precise bounds with respect to the smoothness norms of the surface $\Gamma$.
We then conclude the section by discussing the behavior of the operator $T_\lambda$ under perturbations of the fixed surface $\Gamma$.
Next, in Section \ref{sec-maximal-operator} we consider a maximal operator associated with operators of the form (\ref{def-T-lambda})
when the surface is allowed to oscillate. More precisely, for a
family of surfaces $\{\Gamma_\e\}_{0<\e \leq 1}$,
we let $T_\lambda^\e$ be the operator defined by (\ref{def-T-lambda}) for the surface $\Gamma_\e$, where the parameter $0<\e\leq 1$ is meant to
model an oscillatory behavior of the given family of hypersurfaces.
Then we analyse boundedness of the following maximal operator $T_\lambda^*f (x,x_{n+1}) =\sup\limits_{0<\e\leq 1} |T_\lambda^\e f (x,x_{n+1})|$,
where $(x,x_{n+1})\in \R^n \times \R$.
Finally, Section \ref{sec-nonlinear} studies operators of type $T_\lambda$, however instead of a linear phase, there we have
a ``fractional"-type nonlinearity, namely $x\cdot y$ in the exponential is replaced by $|x-y|^\gamma$ with $\gamma\geq 1$. This change requires
a radically different approach. We discuss in subsection \ref{sub-sec-Helmholtz} how the case $\gamma=1$ applies to Helmholtz
equation. At the end of Section \ref{sec-nonlinear} we 
show that the obtained upper bounds of some of the operators considered in the article are sharp.

\section{The main estimate for $T_\lambda$}\label{sec-est-of-T-lambda}

Throughout this section we will be working with operators defined on $L^2(\R^n)$
and with values in $L^2(\R^n)$. Thus if $T$ is an operator of this type,
by $||T|| $ we denote its operator norm. 
Also, when estimating a certain quantity, we will be only concerned
with constants that depend on the surface, i.e. the function $\psi$ of (\ref{Gamma-surface}).
The following is our main estimate for the operator $T_\lambda$ defined by (\ref{def-T-lambda}).

\vspace{0.2cm}

\begin{theorem}\label{Thm-main-on-T-lambda}
For any $1\leq m< n$, and any $x_{n+1}\in \R$ we have
\begin{equation}
 || T_\lambda||   \lesssim \lambda^{-\frac{m}{2} \frac{n }{n+1}}  [ 1+ || \nabla  \psi||_{ L^\infty(\R^n ) }  ] \times
 [ 1+ || \psi||_{ L^\infty(\R^n) }   ],
\end{equation}
where the constant depends on the cut-off function $\varphi_0$, and is independent of $x_{n+1}$ and $\psi$.
\end{theorem}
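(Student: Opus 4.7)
The plan is to treat $T_\lambda$ as an oscillatory integral operator on $L^2(\R^n)$ and exploit the linear phase $\lambda x\cdot y$ via the $TT^*$ method combined with a dyadic decomposition of the singular kernel. Parametrize $\Gamma$ by $y\mapsto(y,\psi(y))$, so that $T_\lambda f(x)=\int_{\R^n} e^{i\lambda x\cdot y}\,a(x,y) f(y)\,dy$ with amplitude $a(x,y)=\varphi_0((x,x_{n+1}),(y,\psi(y)))\,K(x-y,x_{n+1}-\psi(y))\sqrt{1+|\nabla\psi(y)|^2}$. The surface area element already contributes $\sqrt{1+|\nabla\psi|^2}\le 1+\|\nabla\psi\|_{L^\infty}$, which accounts for one of the geometric factors in the theorem.

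Introduce a dyadic partition $1=\sum_{k\ge 0}\chi(2^k|z|)$ adapted to the $(n{+}1)$-dimensional distance $|z|=\sqrt{|x-y|^2+(x_{n+1}-\psi(y))^2}$, and write $a=\sum_k a_k$, $T_\lambda=\sum_k T_\lambda^k$. On the shell $|z|\sim 2^{-k}$, assumption (A2) gives $|a_k|\lesssim 2^{k(n-m)}(1+\|\nabla\psi\|_{L^\infty})$, and a short geometric argument shows that for each $(x,x_{n+1})$ the set $\{y:a_k(x,y)\ne 0\}$ has $n$-dimensional measure $\lesssim 2^{-kn}$, sits in a $2^{-k}$-neighborhood of $x$, and is empty unless $|x_{n+1}-\psi(x)|\lesssim 2^{-k}$. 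Apply $TT^*$: the kernel of $T_\lambda^k (T_\lambda^k)^*$ is $L_k(x,x')=\int e^{i\lambda(x-x')\cdot y}\,a_k(x,y)\overline{a_k(x',y)}\,dy$, a linear oscillatory integral in $y$ whose phase has constant gradient $\lambda(x-x')$. Two complementary estimates are then available: the trivial bound $|L_k(x,x')|\lesssim 2^{k(n-2m)}(1+\|\nabla\psi\|_{L^\infty})^2\,\mathbf{1}_{|x-x'|\lesssim 2^{-k}}$, and, after $N$ integrations by parts in $y$, the oscillatory bound carrying an extra factor $(2^k(1+\|\nabla\psi\|_{L^\infty})/(\lambda|x-x'|))^N$, since each $y$-derivative of $a_k$ costs $2^k$ from differentiating the dyadic cutoff and $1+\|\nabla\psi\|_{L^\infty}$ from the chain rule through $\psi$ in $K(x-y,x_{n+1}-\psi(y))$.

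Taking the pointwise minimum of the two bounds for $|L_k|$, applying Schur's test by integrating out $x'$, and summing the resulting geometric series over $k$ yields an estimate of the form $\|T_\lambda\|\lesssim\lambda^{-\gamma}(1+\|\nabla\psi\|_{L^\infty})^\beta$ whose exponents $\gamma$ and $\beta$ are dictated by the balance scale. The critical observation is that taking $N$ large produces a faster $\lambda$-decay but also a larger power of $(1+\|\nabla\psi\|_{L^\infty})$; the optimal choice fixes $N$ at the smallest integer making the $x'$-integral converge and then calibrates the dyadic balance so that the power of $(1+\|\nabla\psi\|_{L^\infty})$ stays at $1$, which in turn forces the stated $\lambda$-exponent $\gamma=\tfrac{mn}{2(n+1)}$. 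The remaining factor $(1+\|\psi\|_{L^\infty})$ is absorbed as a conservative estimate on the vertical extent of $\Gamma$ inside the fixed compact support of $\varphi_0$, which controls the number of dyadic scales that effectively contribute and the measure of the relevant portion of $\Gamma$.

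The main obstacle lies precisely in this trade-off between the $\lambda$-rate and the dependence on $\|\nabla\psi\|_{L^\infty}$: a naive dyadic-$TT^*$ argument yields a polynomial dependence on $\|\nabla\psi\|_{L^\infty}$ whose degree grows with the number of integrations by parts, and careful bookkeeping is required to isolate the linear dependence advertised in the theorem while keeping a positive power of $\lambda^{-1}$. A secondary subtlety is the geometric behavior of $\mathrm{supp}\,a_k(x,\cdot)$: when $(x,x_{n+1})\in\Gamma$ it is a full $n$-ball of radius $2^{-k}$, whereas when $(x,x_{n+1})$ is off $\Gamma$ but within distance $\sim 2^{-k}$ it is a thinner neighborhood cut out by the graph constraint $|x_{n+1}-\psi(y)|\lesssim 2^{-k}$; both regimes must be handled uniformly in $x_{n+1}$ and in the smoothness data of $\psi$.
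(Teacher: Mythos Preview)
Your framework is the right one at the level of Lemma~2.2 in the paper (a $TT^*$ estimate with integration by parts and a scale cutoff), but the proposal has a real gap precisely at the point you flag as ``the main obstacle'' and then defer to ``careful bookkeeping''. The issue is not bookkeeping: when you integrate by parts $N$ times in
\[
L_k(x,x')=\int e^{i\lambda(x-x')\cdot y}\,a_k(x,y)\overline{a_k(x',y)}\,dy,
\]
each $y$-derivative that lands on $K(x-y,x_{n+1}-\psi(y))$ or on $\chi(2^k|(x-y,x_{n+1}-\psi(y))|)$ produces, via the chain rule, derivatives of $\psi$ of order up to $N$, not merely powers of $\|\nabla\psi\|_{L^\infty}$. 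Since you need $N>n$ to make the Schur integral converge, your scheme as written yields a bound depending on $\|D^\alpha\psi\|_{L^\infty}$ for $|\alpha|\le N$, which is exactly what the theorem forbids. There is no choice of $N$ or dyadic balance that removes this dependence.

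The paper avoids this by a different mechanism. After splitting at a single scale $|x-y|\sim\lambda^{-\beta}$ (the near part is handled by the maximal function and costs only the Jacobian factor $1+\|\nabla\psi\|_{L^\infty}$), the far part is linearized in the vertical variable by the fundamental theorem of calculus:
\[
K(x-y,x_{n+1}-\psi(y))=K(x-y,x_{n+1})+\int_{x_{n+1}}^{x_{n+1}-\psi(y)}\partial_{n+1}K(x-y,t)\,dt.
\]
Both $K(x-y,x_{n+1})$ and $\partial_{n+1}K(x-y,t)$ are kernels in which $\psi$ no longer appears, so the $TT^*$/integration-by-parts argument applies with constants independent of $\psi$. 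The second term is then rewritten, via Fubini, as $\int_{x_{n+1}-M}^{x_{n+1}+M}F_t(\mathbb{I}_{\{\pm\psi>\,x_{n+1}-t\}}g)\,dt$ with $M=\|\psi\|_{L^\infty}$, and Minkowski's inequality produces exactly the factor $\|\psi\|_{L^\infty}$. The only place $\nabla\psi$ enters is through the surface Jacobian, giving the single factor $1+\|\nabla\psi\|_{L^\infty}$. The exponent $\tfrac{mn}{2(n+1)}$ then arises because $\partial_{n+1}K$ has one extra order of singularity, which costs an additional $\lambda^{\beta}$ in the far-part estimate; balancing $\lambda^{-m\beta}$ against $\lambda^{n(\beta-1/2)-m\beta+\beta}$ gives $\beta=\tfrac{n}{2(n+1)}$.

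In short, the missing idea is to \emph{freeze} the vertical argument of $K$ before doing any integration by parts, so that no derivatives of $\psi$ are ever taken; your explanation of where $\|\psi\|_{L^\infty}$ comes from (``vertical extent of $\Gamma$ inside $\mathrm{supp}\,\varphi_0$'') is not the actual mechanism.
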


\vspace{0.2cm}

We start by decomposing $T_\lambda$ as follows. Fix a parameter $0\leq \beta \leq 1$ that will be specified below,
and a smooth function $\varphi :\R^n \to [0,1] $ such that $ \varphi(y)=0 $ for $|y|\leq 1$
and $\varphi(y)=1$ for $|y|\geq 2$.
We then have the following decomposition
$$
T_\lambda f  = A_\lambda f  + B_\lambda  f,
$$
where $B_\lambda$ in terms of volume integral is
\begin{multline*}
B_\lambda f(x,x_{n+1}) = \int_{\R^n } e^{ i \lambda x \cdot y   } 
\varphi_0( (x,x_{n+1}) , y ) K(x -y , x_{n+1} -   \psi(y  ) )
\times  \\  \varphi((x -y ) \lambda^\beta )  ( 1+|   \nabla  \psi( y   ) |^2  )^{1/2}   f(y ) d y .
\end{multline*}

\vspace{0.2cm}

\begin{lem}\label{Lem-A-lambda}
For any $\lambda \geq 1$, any $m>0$, and any $0 \leq  \beta \leq 1$ one has $$
|| A_\lambda||  \lesssim [ 1+ || \nabla  \psi ||_{L^\infty (\R^n ) } ]  \lambda^{-m \beta}.
$$
\end{lem}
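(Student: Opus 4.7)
The plan is to invoke Schur's test directly, since $A_\lambda$ is a positive kernel operator once we take absolute values. First, observe that $A_\lambda$ is obtained by substituting $1-\varphi((x-y)\lambda^\beta)$ in place of $\varphi((x-y)\lambda^\beta)$ in the formula for $B_\lambda$; in particular, its integral kernel
\[
k(x,y) = e^{i\lambda x\cdot y}\, \varphi_0((x,x_{n+1}),y)\, K(x-y, x_{n+1}-\psi(y)) \bigl(1-\varphi((x-y)\lambda^\beta)\bigr)(1+|\nabla \psi(y)|^2)^{1/2}
\]
is supported in the region $\{|x-y|\leq 2\lambda^{-\beta}\}$ (times the compact support of $\varphi_0$).

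Next I would obtain a pointwise bound on $|k(x,y)|$ independent of the phase. The exponential has modulus one, $\varphi_0$ is bounded, and $(1+|\nabla \psi(y)|^2)^{1/2}\leq 1 + \|\nabla\psi\|_{L^\infty(\R^n)}$. For the kernel $K$, assumption (A2) with $\alpha=0$ yields $|K(z)| \lesssim |z|^{m-n}$, and since $m-n<0$, the inequality $|(x-y, x_{n+1}-\psi(y))| \geq |x-y|$ gives
\[
|K(x-y, x_{n+1}-\psi(y))| \lesssim |x-y|^{m-n}.
\]
Combining,
\[
|k(x,y)| \lesssim \bigl[1+\|\nabla\psi\|_{L^\infty}\bigr]\, |x-y|^{m-n}\, \mathbf{1}_{\{|x-y|\leq 2\lambda^{-\beta}\}}.
\]

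Then I would compute the Schur integrals. Using polar coordinates in $\R^n$ and the hypothesis $m>0$,
\[
\int_{\R^n} |k(x,y)|\, dy \lesssim [1+\|\nabla\psi\|_{L^\infty}] \int_{0}^{2\lambda^{-\beta}} r^{m-n}\, r^{n-1}\, dr = [1+\|\nabla\psi\|_{L^\infty}]\, \frac{(2\lambda^{-\beta})^m}{m},
\]
uniformly in $x$ (and in $x_{n+1}$, which enters only through the harmless $\varphi_0$ and the shift in $\psi$). The symmetric integral $\int |k(x,y)|\, dx$ admits the same bound by the same change of variables $z=x-y$. Applying Schur's test then yields
\[
\|A_\lambda\| \lesssim \bigl[1+\|\nabla\psi\|_{L^\infty(\R^n)}\bigr]\, \lambda^{-m\beta},
\]
as required.

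The argument is essentially routine; the only thing to be careful about is that the integrability of $|x-y|^{m-n}$ near the origin in $\R^n$ uses precisely $m>0$, which is the hypothesis invoked in the statement. No oscillation of the phase is exploited at this stage, since we are controlling the short-range piece of $T_\lambda$ where the size of the support already delivers the decay in $\lambda$.
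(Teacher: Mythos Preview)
Your argument is correct. You and the paper start from the same pointwise kernel bound
\[
|k(x,y)| \lesssim [1+\|\nabla\psi\|_{L^\infty}]\, |x-y|^{m-n}\, \mathbf{1}_{\{|x-y|\leq 2\lambda^{-\beta}\}},
\]
but then diverge: the paper bounds the resulting singular average by a dyadic decomposition that leads to $|A_\lambda f(x,x_{n+1})|\lesssim [1+\|\nabla\psi\|_{L^\infty}]\,\lambda^{-m\beta}\,\mathcal{M}f(x)$ and then invokes the strong $(2,2)$ boundedness of the Hardy--Littlewood maximal operator, whereas you apply Schur's test directly. Your route is marginally more elementary for the $L^2$ statement at hand (no maximal theorem needed), while the paper's pointwise maximal-function domination carries a bit more information---for instance, it immediately transfers to any $L^p$ with $1<p\leq\infty$ and is the kind of bound one reuses when proving the maximal estimate of Lemma~\ref{Lem-A-lambda-max}. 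Either way the key input is the same: the support restriction $|x-y|\leq 2\lambda^{-\beta}$ together with $m>0$ makes $|x-y|^{m-n}$ integrable with the right power of $\lambda$.
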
  
\begin{proof}
Rewriting $A_\lambda $ in terms of volume integral
we get
\begin{multline*}
A_\lambda f(x, x_{n+1}) = \int_{\R^n } e^{ i \lambda x \cdot y   }  
\varphi_0((x,x_{n+1}) ,y ) K(x -y , x_{n+1} -  \psi(y  ) )
\times  \\ [ 1-\varphi((x -y ) \lambda^\beta )] ( 1+| \nabla  \psi( y  ) |^2  )^{1/2}   f(y  ) d y .
\end{multline*}

Now, using assumption (A2) for kernel $K$, along with the definition of $\varphi$ we obtain
\begin{equation}\label{A-est1}
 | A_\lambda f(x,x_{n+1}) | \lesssim [ 1+|| \nabla  \psi ||_{L^\infty (\R^n) } ] \mathcal{I}_\lambda (x ),
\end{equation}
where the constant depends on the supremum norm of $\varphi_0$, and the appropriate constant involved in (A2), and we have
\begin{multline*}
\mathcal{I}_\lambda (x ): = \int\limits_{ | x  -y  |\leq 2\lambda^{-\beta} }  \frac{| f(y ) | }{  | x  - y  |^{n  - m } } dy  \leq \\ 
\sum\limits_{2^{-k} \leq 4\lambda^{-\beta}} \int\limits_{ 2^{-k-1} \leq |x-y| \leq 2^{-k} } \frac{| f(y) | }{  | x - y |^{n  -m } } dy  \lesssim \\
\sum_{ 2^{-k} \leq 4 \lambda^{-\beta} }  2^{ -m k } \frac{1}{2^{-k n}} \int\limits_{ 2^{-k-1} \leq |x -y | \leq 2^{-k} } | f(y ) |    dy  \lesssim \\ 
\mathcal{M} f(x ) \sum_{ 2^{-k} \leq 4 \lambda^{-\beta} }  2^{ -m k } \lesssim 
\mathcal{M} f(x ) \lambda^{-\beta m},
\end{multline*}
where $\mathcal{M}$ is the Hardy-Littlewood maximal function.
Since $\mathcal{M}$ has strong $(2,2)$ type, the last inequality combined with (\ref{A-est1})
completes the proof of the Lemma.
\end{proof}

\noindent To study $B_\lambda f $ we rewrite it in the form
\begin{equation}\label{def-B-lambda}
B_\lambda f(x,x_{n+1}) = \int_{\R^n } k_\lambda \big( (x,x_{n+1}), y \big) K \big(x - y , x_{n+1} -\psi(y ) \big) g(y ) dy ,
\end{equation}
where we have set
$$
k_\lambda \big( (x,x_{n+1}), y \big) := e^{ i \lambda x  \cdot y  } \varphi_0 \big( (x,x_{n+1}), y \big) \varphi( (x -y ) \lambda^{\beta} ) , 
$$
and
\begin{equation}\label{g-in-B-lambda}
g(y ) :=   f(y ) (  1+|\nabla \psi(y )|^2 )^{1/2} , \qquad  y \in \R^n .
\end{equation}
For the estimate of $B_\lambda$ we start with a lemma, where the constants are allowed to depend on the norm
of derivatives of the function $\psi$ representing the surface. This dependence will be revised later on.

\begin{lem}\label{Lem-fixed-surface}
For $(x,x_{n+1}) \in \R^n \times \R $ set
$$
(\widetilde{B}_\lambda f)(x,x_{n+1} ) = \int_{\R^n } k_\lambda \big( (x,x_{n+1}) , y  \big) K \big(x  - y , x_{n+1}- \psi(y )\big) f(y ) dy .
$$
Then
$$
|| \widetilde{B}_\lambda ||  \leq C_\psi \lambda^{ n (\beta - 1/2) - m\beta },
$$
where $0\leq m < n$, $C_\psi$ is independent of $\lambda$, and $x_n$, and depends on finite number
of derivatives of $\psi$.
\end{lem}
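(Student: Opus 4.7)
The plan is to apply the $TT^*$ method combined with non-stationary phase in the $y$-variable and Schur's test. First I would write down the kernel of $\widetilde{B}_\lambda\widetilde{B}_\lambda^*$, namely
\[
L(x,x')=\int_{\R^n} e^{i\lambda(x-x')\cdot y}\,a(x,y)\,\overline{a(x',y)}\,dy,
\]
with $a(x,y):=\varphi_0((x,x_{n+1}),y)\,\varphi((x-y)\lambda^\beta)\,K(x-y,\,x_{n+1}-\psi(y))$. Two structural features of $a$ will be exploited. First, its $y$-support is compact (from $\varphi_0$) and contained in $\{|x-y|\geq\lambda^{-\beta}\}$ (from $\varphi$). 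Second, combining (A2) with the chain rule applied to the $\psi$-dependence inside $K$, and using $|x-y|^{-1}\leq\lambda^\beta$ on the support, one obtains the pointwise derivative bound
\[
|\partial_y^\alpha a(x,y)|\lesssim_\alpha C_\psi\,|x-y|^{m-n}\,\lambda^{\beta|\alpha|},
\]
with $C_\psi$ depending only on finitely many $L^\infty$-norms of derivatives of $\psi$.

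Next I would integrate by parts $N$ times in the direction $y_j$ for which $|(x-x')_j|\gtrsim|x-x'|$, via the operator $\mathcal{L}^*=(i\lambda(x-x')_j)^{-1}\partial_{y_j}$. Each integration by parts produces a gain $(\lambda|x-x'|)^{-1}$ against a cost of $\lambda^\beta$ per $y$-derivative on $a\overline{a'}$, so that combining with the trivial size estimate gives
\[
|L(x,x')|\lesssim_{N,\psi}\min\!\bigl(1,\,(\lambda^{1-\beta}|x-x'|)^{-N}\bigr)\,\Sigma(x,x'),\qquad \Sigma(x,x'):=\int|a(x,y)|\,|a(x',y)|\,dy.
\]

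The quantitative heart of the argument is the estimation of $\Sigma$. Bounding $|a(x,y)|\lesssim|x-y|^{m-n}\chi_{\{|x-y|\geq\lambda^{-\beta},\,|y|\lesssim 1\}}$ and splitting the $y$-integration according to which of $|x-y|,|x'-y|$ is the smaller, a direct computation in the range $2m<n$ yields $\Sigma(x,x')\lesssim\min(\lambda^{\beta(n-2m)},|x-x'|^{2m-n})$. Plugging this into Schur's test $\|\widetilde{B}_\lambda\|^2\leq\sup_x\int|L(x,x')|\,dx'$ and splitting the $x'$-integral at the natural threshold $|x-x'|\sim\lambda^{\beta-1}$ (size estimate on the near part, integration-by-parts bound on the far part) produces, after elementary integrations, $\sup_x\int|L|\,dx'\lesssim_\psi\lambda^{n(2\beta-1)-2m\beta}$, which is precisely the square of the claimed estimate.

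The main obstacle I expect is to handle the regime $2m\geq n$ uniformly, since there the $|x-x'|$-dependent refinement of $\Sigma$ disappears and the bare bound $\Sigma\lesssim 1$ is too weak to close Schur's test. My preferred remedy is a dyadic decomposition $K=\sum_k K_k$ with $\mathrm{supp}\,K_k\subset\{|z|\sim 2^{-k}\}$ and $|\partial^\gamma K_k|\lesssim 2^{k(n-m+|\gamma|)}$. Applying the same $TT^*$/non-stationary phase/Schur procedure to each piece $T_k$ produces $\|T_k\|\lesssim 2^{k(n-m)}\lambda^{-n/2}$ in the non-stationary regime $k\leq\tfrac12\log_2\lambda$ and $\|T_k\|\lesssim 2^{-km}$ otherwise; summing over those $k$ with $2^{-k}\gtrsim\lambda^{-\beta}$ then recovers $\lambda^{n(\beta-1/2)-m\beta}$ uniformly across the full range $0\leq m<n$.
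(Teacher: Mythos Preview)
Your plan is correct and follows the same $TT^*$ / integration-by-parts / Schur scheme as the paper (which in turn is modelled on Phong--Stein). The one substantive difference is in the derivative bookkeeping after integration by parts, and it is exactly this that forces you into the dyadic detour when $2m\geq n$.

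You use the bound $|\partial_y^{\alpha}a(x,y)|\lesssim|x-y|^{m-n}\lambda^{\beta|\alpha|}$, i.e.\ you immediately trade every factor $|x-y|^{-1}$ (produced when a $y$-derivative lands on $K$ or on $\varphi(\cdot\,\lambda^{\beta})$) for the cruder $\lambda^{\beta}$. The paper keeps these factors: after $N$ integrations by parts the kernel of $\widetilde{B}_\lambda\widetilde{B}_\lambda^{*}$ is controlled by
\[
|M_\lambda(x,x')|\lesssim_\psi(\lambda|x-x'|)^{-N}\sum_{k+l\leq N}\int\limits_{\substack{|x-z|\geq\lambda^{-\beta}\\ |x'-z|\geq\lambda^{-\beta}}}|x-z|^{-n+m-k}\,|x'-z|^{-n+m-l}\,dz.
\]
Each of these integrals is $\lesssim\lambda^{\beta(n-2m+N)}$ as soon as $N>2m-n$: for large $k+l$ the cutoff at $\lambda^{-\beta}$ produces exactly this power, while for small $k+l$ (the case that only arises when $2m\geq n$) the integral is simply $O(1)\leq\lambda^{\beta(n-2m+N)}$. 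Hence one obtains, uniformly over the whole range $0\leq m<n$,
\[
|M_\lambda(x,x')|\lesssim_\psi\lambda^{\beta(n-2m)}\bigl(\lambda^{1-\beta}|x-x'|\bigr)^{-N},
\]
valid for every real $N>\max(0,2m-n)$ after a convexity interpolation between integer values. Since $m<n$ one may always choose $N_1<n<N_2$ in this range and close Schur's test directly, with no dyadic decomposition of $K$ needed.

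Your dyadic remedy for $2m\geq n$ is correct and recovers the same bound, but it is extra work that the sharper accounting above avoids. Incidentally, in your $2m<n$ argument the refined estimate $\Sigma(x,x')\lesssim|x-x'|^{2m-n}$ is not actually needed: the crude bound $\Sigma\lesssim\lambda^{\beta(n-2m)}$ already suffices on both the near and far parts of the Schur integral.
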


\begin{proof}
The proof follows closely the lines of Proposition 1 on p. 134 of \cite{Phong-Stein}.
Observe that $\widetilde{B}_\lambda$ has kernel 
$$
L_\lambda (x , y ) = e^{ i \lambda x  \cdot y   } \varphi_0( (x,x_{n+1}), y  ) \varphi( (x  -y ) \lambda^\beta ) K(x  - y , x_{n+1} - \psi(y ) ),
$$
and the conjugate operator $\widetilde{B}^*_\lambda$ has kernel $ \overline{L_\lambda (y  , x )} $. It follows that
$ \widetilde{B}_\lambda  \widetilde{B}^*_\lambda $ has kernel
\begin{multline*}
M_\lambda (x, y) = L_\lambda (x, z) \overline{L_\lambda ( y, z )} d z  = \\
\int_{\R^n } e^{ i\lambda ( x  - y  ) \cdot z  } \varphi_0( (x,x_{n+1}), z ) \varphi_0( (y, x_{n+1}) , z ) 
\varphi ( (x -z ) \lambda^\beta ) \varphi ( (y -z ) \lambda^\beta ) \times 
\\ K( x -z , x_{n+1}- \psi(z ) ) \overline{ K ( y - z , x_{n+1}- \psi(z ) )} dz .
\end{multline*}
Since we will obtain uniform estimates in $x_{n+1}$ the dependence of the kernels on $x_{n+1}$ is dropped from the notation.
Using the estimate (A2) for $K$, for any $\alpha \in \Z^n_+$ we have
$$
| (D_{z }^\alpha  K ) (x  - z ,  x_{n+1} - \psi(z ) )  | \lesssim_\psi \frac{1}{ | x - z |^{n  - m + |\alpha|} }.
$$
In view of the last inequality, integrating by parts $N\geq 0$ times  in the kernel $M$ we get
\begin{equation}\label{M-estimate}
| M_\lambda (x,y) | \lesssim_\psi ( \lambda |x - y| )^{-N} \sum\limits_{ 0\leq k+l \leq N } \mathcal{I}_{k,l}(x , y ),
\end{equation}
where
$$
 \mathcal{I}_{k,l} (x, y): = \int\limits_{\substack{ \lambda^{-\beta} \leq | x  - z  | \leq C \\  \lambda^{-\beta} \leq | y  - z  | \leq C  } } 
| x  -z  |^{-n+m - k}  | y - z   |^{-n+m  - l} d z .
$$
Assuming $n-2m+N>0$ and using $k+l\leq N$, it is easy to see that each $\mathcal{I}_{k,l}$ is uniformly bounded above by
$$
\int\limits_{ \lambda^{-\beta} \leq  |z  |\leq C } |z |^{ -2n+2m -N } dz  
\lesssim \int\limits_{ \lambda^{-\beta} }^1 \frac{r^{n-1}}{ r^{2n-2m +N} } dr \lesssim
\lambda^{ \beta (   n-2m+N  ) }.
$$
From here, getting back to (\ref{M-estimate}) we obtain
\begin{equation}\label{M-est-final}
| M(x  , y) |  \lesssim_\psi \frac{\lambda^{ \beta(n-2m ) }  }{ ( \lambda^{1-\beta} | x  - y | )^N  } ,
\end{equation}
where $N\geq 0$ is an integer satisfying $n-2m+N>0$. By a convexity argument as in Proposition 1 of \cite{Phong-Stein} p.134, we can eliminate
the condition on $N$ being an integer, thus getting (\ref{M-est-final}) with non integer values of $N$.
We then choose $N_1< n $ and $N_2>n $ in (\ref{M-est-final}),
which is allowable since $m<n$. It follows that
\begin{multline*}
\int_{ \R^n } | M(x , y ) | d y  \lesssim_\psi 
\int\limits_{ \lambda^{1-\beta} | z  | \leq 1  } \frac{\lambda^{ \beta(n-2m  ) }  }{ ( \lambda^{1-\beta} | z | )^{N_1}  } d z  +
\int\limits_{ \lambda^{1-\beta} | z  | > 1  } \frac{\lambda^{ \beta(n-2m  ) }  }{ ( \lambda^{1-\beta} | z | )^{N_2}  } d z  \lesssim \\
\big[ \text{substituting } u= \lambda^{1-\beta} z  \big] \ \lesssim_\psi  \lambda^{ n( 2\beta -1 ) - 2m \beta  },
\end{multline*}
uniformly in $x$. By symmetry, the same estimate holds for integration with respect to $x$ and fixed $y$.
Using the obtained mixed $L^1$-norm estimate of the kernel $M$ we can easily bound the $L^2 \mapsto L^2$ norm of the corresponding operator,
thus obtaining
$$
|| \widetilde{B}_\lambda || \lesssim_\psi \lambda^{ n (\beta - 1/2) - m\beta  },
$$
where we choose $0\leq \beta \leq 1$ satisfying 
$ \beta \leq  \frac{n }{ 2(n-m ) }$ to get a bound for the norm of $\widetilde{B}_\lambda$. The proof is complete.
\end{proof}

We are now ready to prove the main estimate on $B_\lambda$ defined by (\ref{def-B-lambda}).

\begin{prop}\label{Thm-B-lambda-0}
Assume $1\leq m <n $, and for fixed $(x, x_{n+1}) \in \R^n\times \R$ let $B^{(0)}_\lambda$ be defined as follows
$$
B_\lambda^{(0)} g(x,x_{n+1}) = \int_{\R^n } e^{ i \lambda x  \cdot y  } \varphi_0( (x,x_{n+1}) , y ) 
\varphi( (x  - y ) \lambda^\beta ) K(x  - y , x_{n+1} - \psi(y ) ) g(y ) d y  .
$$
Then
$$
|| B_\lambda^{(0)} ||  \lesssim   
\lambda^{ n (\beta -1/2) -m \beta  } + || \psi||_{L^\infty} \lambda^{ n (\beta -1/2) -m\beta+\beta  }.
$$
\end{prop}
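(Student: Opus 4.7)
The plan is to decouple the $\psi$-dependence of the kernel $K(x-y, x_{n+1} - \psi(y))$ from a $\psi$-free principal part, and treat the remainder as a lower-order perturbation whose kernel satisfies $(A2)$ with $m$ replaced by $m-1$. By the fundamental theorem of calculus applied to $s \mapsto K(x-y, x_{n+1} - s \psi(y))$,
$$K(x-y, x_{n+1} - \psi(y)) = K(x-y, x_{n+1}) - \psi(y) \int_0^1 (\partial_{n+1} K)(x-y, x_{n+1} - t \psi(y)) \, dt.$$
This induces a splitting $B_\lambda^{(0)} = B_\lambda^{(0),1} + B_\lambda^{(0),2}$ corresponding to the two terms on the right-hand side.

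For $B_\lambda^{(0),1}$ the integrand involves only $K(x-y, x_{n+1})$, which is independent of $\psi$. I would apply Lemma \ref{Lem-fixed-surface} with the function $\psi$ there taken to be identically zero: all derivatives of $\psi$ vanish and the constant $C_\psi$ reduces to an absolute constant (depending only on $\varphi_0$ and the constants in $(A2)$). This yields
$$|| B_\lambda^{(0),1} || \lesssim \lambda^{n(\beta - 1/2) - m \beta},$$
the first term in the stated bound.

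For $B_\lambda^{(0),2}$ I would factor out the multiplier $\psi(y)$ and write $B_\lambda^{(0),2} g = - B'_\lambda (\psi \cdot g)$, where $B'_\lambda$ is the operator with kernel
$$e^{i \lambda x \cdot y} \varphi_0((x, x_{n+1}), y) \, \varphi((x-y) \lambda^\beta) \int_0^1 (\partial_{n+1} K)(x-y, x_{n+1} - t \psi(y)) \, dt.$$
Since $|| \psi \cdot g ||_{L^2} \leq || \psi ||_{L^\infty} \, || g ||_{L^2}$, it suffices to prove $|| B'_\lambda || \lesssim \lambda^{n(\beta - 1/2) - (m-1) \beta}$. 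The kernel $\partial_{n+1} K$ satisfies $(A2)$ with $m$ replaced by $m-1$, since differentiation lowers the exponent by one; and on the support $|x-y| \geq \lambda^{-\beta}$ one has $|(x-y, x_{n+1} - t \psi(y))| \geq |x-y|$, so (with $m-1-n-|\alpha| < 0$) the bounds $|x-y|^{m-1-n-|\alpha|}$ transfer to the averaged kernel $\mathcal K(x,y) := \int_0^1 (\partial_{n+1} K)(x-y, x_{n+1} - t\psi(y)) \, dt$. The hypothesis $1 \leq m < n$ guarantees $0 \leq m-1 < n$, so the assumptions of Lemma \ref{Lem-fixed-surface} remain satisfied with $m-1$ in place of $m$. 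Rerunning the $TT^*$ argument of that lemma for $B'_\lambda$ then gives $|| B'_\lambda || \lesssim \lambda^{n(\beta - 1/2) - m\beta + \beta}$, and multiplication by $|| \psi ||_{L^\infty}$ produces the second term.

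The main obstacle is verifying that the proof of Lemma \ref{Lem-fixed-surface} genuinely adapts to the averaged kernel $\mathcal K(x, y)$, whose $y$-dependence enters through a $t$-integral rather than a direct substitution $s = -\psi(y)$. Since $t$-differentiation commutes with the $z$-integration by parts used in the $TT^*$ estimate, the chain-rule bookkeeping is identical: each $z$-derivative produces either a derivative of $K$ with one additional index (contributing an extra factor $|x-z|^{-1}$) or a derivative of $\psi$ multiplied by a higher $x_{n+1}$-derivative of $K$ (contributing the same power of $|x-z|$). The resulting dependence on finitely many derivatives of $\psi$ is absorbed into the implicit constant, consistent with the standing smoothness hypothesis on $\psi$.
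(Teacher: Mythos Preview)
Your splitting into $B_\lambda^{(0),1}+B_\lambda^{(0),2}$ and the treatment of $B_\lambda^{(0),1}$ are fine and match the paper. The gap is in $B_\lambda^{(0),2}$. You concede in your final paragraph that rerunning the $TT^*$ argument on the averaged kernel $\mathcal K(x,y)=\int_0^1(\partial_{n+1}K)(x-y,x_{n+1}-t\psi(y))\,dt$ produces constants depending on finitely many derivatives of $\psi$. But that is precisely what the proposition is designed to \emph{avoid}: the explicit appearance of $\|\psi\|_{L^\infty}$ in the conclusion, together with Theorem~\ref{Thm-main-on-T-lambda} (whose constant is stated to be independent of $\psi$), means the implicit constant here must not depend on $\psi$. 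If one is willing to accept $C_\psi$ depending on higher derivatives, Lemma~\ref{Lem-fixed-surface} already gives the stronger bound $C_\psi\lambda^{n(\beta-1/2)-m\beta}$ directly for $B_\lambda^{(0)}=\widetilde B_\lambda$, and the proposition would be redundant. Concretely, when you integrate by parts in $z$ in the $TT^*$ kernel, each $\partial_{z_j}$ acting on $\mathcal K(x,z)$ produces a term $-t\,\partial_j\psi(z)\,(\partial_{n+1}^2K)(x-z,x_{n+1}-t\psi(z))$, and iterating brings in $\|\psi\|_{C^N}$.

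The paper's fix is to parametrise the fundamental theorem of calculus differently, writing $K(x-y,x_{n+1}-\psi(y))-K(x-y,x_{n+1})=\int_{x_{n+1}}^{x_{n+1}-\psi(y)}\partial_{n+1}K(x-y,t)\,dt$, and then to swap the order of integration (Fubini) so that $t$ becomes the \emph{outer} variable, ranging over an interval of length $M=\|\psi\|_{L^\infty}$. For each fixed $t$ the inner kernel $\partial_{n+1}K(x-y,t)$ is a function of $x-y$ alone---completely $\psi$-free---so Lemma~\ref{Lem-fixed-surface} applies with an absolute constant; the $\psi$-dependence of the domain of the inner $y$-integral becomes a harmless characteristic-function multiplier $\mathbb{I}_{\{\psi(y)\gtrless x_{n+1}-t\}}$ on $g$. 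The factor $\|\psi\|_{L^\infty}$ then arises not from a multiplier $\psi(y)$ on $g$ as in your argument, but from the length of the $t$-interval via Minkowski's inequality. Your parametrisation $t\in[0,1]$ with argument $x_{n+1}-t\psi(y)$ keeps $\psi(y)$ inside the kernel and so cannot achieve this decoupling.
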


\begin{remark}
Observe that if $B_\lambda$ is the operator defined by (\ref{def-B-lambda}), then
by (\ref{def-B-lambda}) and (\ref{g-in-B-lambda}) we have
$ B_\lambda f = B_\lambda^{(0)} g $ with $g(y ) = f(y )   ( 1+|\nabla  \psi(y )|^2 )^{1/2}$.
Hence Proposition \ref{Thm-B-lambda-0} implies the following bound
\begin{equation}\label{B-lambda-norm}
|| B_\lambda ||  \lesssim [ 1+|| \nabla  \psi||_{L^\infty{ (\R^n )} } ] \times 
\left( \lambda^{ n (\beta -1/2) -m \beta   } + || \psi||_{L^\infty(\R^n) } \lambda^{ n (\beta -1/2) -m\beta+ \beta  } \right).
\end{equation}

\end{remark}

\bigskip

\begin{proof}[Proof of Proposition \ref{Thm-B-lambda-0}]
We have
$$
K \big(x - y , x_{n+1} - \psi(y ) \big) - K(x  - y , x_{n+1}) = \int\limits_{ x_{n+1} }^{ x_{n+1} - \psi (y ) } \partial_{n+1} K(x  - y , t) dt.
$$
Recall that $ k_\lambda ( (x,x_{n+1}), y ) = e^{ i \lambda x  \cdot y  } \varphi_0( (x,x_{n+1}), y ) \varphi( (x  - y ) \lambda^\beta ) $,
we thus get
\begin{multline*}
B_\lambda^{(0)} g(x,x_{n+1} ) = \int_{\R^n } k_\lambda \big( (x,x_{n+1}), y \big) K \big(x  - y , x_{n+1} - \psi(y ) \big) g(y ) d y  = \\
\int_{ \R^n } k_\lambda \big((x,x_{n+1}), y \big) K\big(x  - y  , x_{n+1} \big) g(y ) d y   + \\
\int_{ \R^n } k_\lambda \big( (x,x_{n+1}), y \big) \left( \int\limits_{ x_{n+1} }^{ x_{n+1} - \psi(y) } 
\partial_{n+1} K(x  - y  , t) dt  \right) g(y ) d y  =: \\ B^{(1)}_\lambda g(x,x_{n+1}) + B^{(2)}_\lambda g(x,x_{n+1}). 
\end{multline*}
It follows by Lemma \ref{Lem-fixed-surface} that
\begin{equation}\label{est-on-D-lambda-fixed}
|| B^{(1)}_\lambda  ||  \lesssim \lambda^{ n( \beta - 1/2) - m \beta    }.
\end{equation}

\noindent For $B^{(2)}_\lambda$ we have the following decomposition
\begin{multline}
B^{(2)}_\lambda g(x,x_{n+1}) = - \iint\limits_{E_1} k_\lambda ( (x,x_{n+1}), y ) \partial_{n+1} K(x  - y , t) g(y ) d y  dt + \\
\iint\limits_{E_2} k_\lambda ( (x,x_{n+1}) ,y ) \partial_{n+1} K(x  - y , t) g(y ) d y  dt,
\end{multline}
where for $M=|| \psi ||_{L^\infty(\R^n ) }$ we let
\begin{multline*}
E_1 := \{ (y , t): \ \psi(y ) >0, \ x_{n+1} - \psi(y ) < t< x_{n+1} \} = \\ \{ (y , t): \ x_{n+1}  - M <t <x_{n+1}, \ \psi(y ) >x_{n+1} -t \}
\end{multline*}
and
\begin{multline*}
E_2 := \{ (y, t): \ \psi(y ) < 0, \ x_{n+1} < t <  x_{n+1}+ \psi(y )  \} =\\ \{ (y , t): \ x_{n+1}   <t < x_{n+1} +M , \ \psi(y ) <x_{n+1} -t \}.
\end{multline*}
From here we obtain  
\begin{multline}
 B_\lambda^{(2)} g(x, x_{n+1}) = \\
 - \int\limits_{x_{n+1} - M}^{x_{n+1}} \int\limits_{\R^n} k_\lambda ( (x,x_{n+1}), y) 
 \partial_{n+1} K(x  - y , t) \mathbb{I}_{ \{ y : \ \psi(y ) > x_{n+1} -t \} } g(y ) dy dt + \\
 \int\limits_{x_{n+1} }^{x_{n+1} + M} \int\limits_{\R^n}  
 k_\lambda ( (x,x_{n+1}), y ) \partial_{n+1} K(x  - y , t) \mathbb{I}_{ \{ y : \ \psi(y ) < x_{n+1} -t \} } g(y ) dy dt,
\end{multline}
where $\mathbb{I}$ stands for the characteristic function. Denote
$ g_{1,t} (y) : = \mathbb{I}_{ \{ y : \ \psi(y ) > x_{n+1} -t \} } g(y ) $,
$g_{2,t}(y ):= \mathbb{I}_{ \{ y : \ \psi(y ) < x_{n+1} -t \} } g(y )$, and for $h\in L^2(\R^n)$ set
$$
F_t h(x,x_{n+1}) : = \int_{\R^n } k_\lambda ( (x,x_{n+1}),y ) \partial_{n+1} K(x  - y  , t) h(y ) d y , \qquad  (x ,x_{n+1}) \in \R^{n+1}. 
$$
With this notation we have
$$
B_\lambda^{(2)} g(x,x_{n+1}) = -\int\limits_{x_{n+1} - M }^{x_{n+1}} F_t g_{1,t} (x,x_{n+1}) dt + 
\int\limits_{x_{n+1}  }^{x_{n+1} + M} F_t g_{2,t} (x,x_{n+1}) dt.
$$
Observe that Lemma \ref{Lem-fixed-surface} implies $ || F_t||  \lesssim \lambda^{ n (\beta -1/2) -m\beta + \beta  } $,
with constants independent of $\psi$, hence using Minkowski's inequality for integrals we obtain
\begin{equation}\label{est-D-lambda-2}
|| B_\lambda^{(2)} ||  \lesssim M \lambda^{ n (\beta -1/2) -m\beta+ \beta  }.
\end{equation}
Finally, combining (\ref{est-on-D-lambda-fixed}) and (\ref{est-D-lambda-2}) we get
$$
||B_\lambda^{(0)}||  \lesssim \lambda^{ n (\beta -1/2) -m \beta     } +
|| \psi||_{L^\infty(\R^n ) } \lambda^{ n (\beta -1/2) -m\beta+ \beta  },
$$
completing the proof the Proposition.
\end{proof}

\begin{proof}[Proof of Theorem \ref{Thm-main-on-T-lambda}]
Getting back to the original operator $T_\lambda$ defined by (\ref{def-T-lambda}),
in view of Lemma \ref{Lem-A-lambda} and Proposition \ref{Thm-B-lambda-0} we have
\begin{multline*}
 || T_\lambda || \leq || A_\lambda||   + || B_\lambda||  \lesssim \\ [ 1+ || \nabla \psi||_{ L^\infty(\R^n ) }  ] \times  [ \lambda^{-m\beta} +  
  \lambda^{ n(\beta - 1/2 ) - m\beta    }   + || \psi||_{ L^\infty(\R^n)} \lambda^{ n ( \beta -  1/2 ) - m\beta +  \beta }  ],
\end{multline*}
which, if optimized in $\beta$ implies
\begin{equation}
 || T_\lambda||   \lesssim \lambda^{-\frac{m}{2} \frac{n }{n+1}}  [ 1+ || \nabla  \psi||_{ L^\infty(\R^n ) }  ] \times
 [ 1+ || \psi||_{ L^\infty(\R^n) }   ].
\end{equation}

The last estimate completes the proof of Theorem.
\end{proof}

\subsection{Perturbing the surface}\label{sec-perturb-surface}
Here we discuss the case when the operator $T_\lambda$ is defined on a perturbation of some fixed surface $\Gamma_0$.
Assume we have 
$$
\Gamma_0 = \{ (y,\psi_0(y)): \ y\in \R^n  \},
$$
where $\psi_0\in C^\infty(\R^n)$ is real-valued and has all its derivatives bounded on $\R^n$.
Now, for $\e>0$ consider a family of smooth and real-valued functions $\psi_\e(y)$, along with the hypersurfaces
$$
\Gamma_\e = \{ ( y,\psi_0(y) +\psi_\e(y) ) : \ y\in \R^n  \},
$$
that is we perturb the fixed surface $\Gamma_0$ by $\psi_\e$. Here as well, the assumption is that each $\psi_\e$
has bounded derivatives of all orders on $\R^n$, however we do not impose any uniform bound with respect to $\e$,
neither we assume any structural restriction on $\psi_\e$.

For $\e>0$ let the operator $T_\lambda^\e$ be defined as in (\ref{def-T-lambda}) where integration is over
$\Gamma_\e$. Then by Theorem \ref{Thm-main-on-T-lambda} we get
$$
||T_\lambda^\e||  \lesssim_{\psi_0} \lambda^{-\frac{m}{2} \frac{n }{n+1}}  [ 1+ || \nabla  \psi_\e  ||_{ L^\infty(\R^n ) }  ] \times
 [ 1+ || \psi_\e ||_{ L^\infty(\R^n) }   ],
$$
uniformly in $\e>0$ and $\lambda>0$. The point of the last estimate is that even with a rough surface,
in a sense that there is no uniform control over the derivatives of the graph representing the surface, we may still control
the norms of the corresponding operators.


\section{Maximal operator for oscillating surfaces}\label{sec-maximal-operator}

The aim of this section is to illustrate that for small oscillations of the surface,
we may as well control the maximal operator associated with the oscillations. For $\e>0$ set
\begin{equation}\label{Gamma-eps}
 \Gamma_\e = \{ ( y , \e^{\gamma} \psi( y  / \e ) ): \ y \in \R^n \},
\end{equation}
where $ \psi \in C^\infty (\R^n) $ is bounded and has bounded derivatives
of all orders, and $\gamma>0$ is a fixed parameter which will be specified below in Theorem \ref{Thm-bdd-maximal-op}.
As in (\ref{def-T-lambda}), for $\lambda>0$, $\e >0$ and $(x,x_{n+1}) \in \R^{n}\times \R$ define
\begin{equation}
 T_\lambda^\e f(x,x_{n+1}) = \int\limits_{\Gamma_\e} e^{ i\lambda x \cdot y } 
 \varphi_0 \big( (x,x_{n+1}), y \big) K(x-y, x_{n+1} - y_{n+1}  ) f(y ) d \sigma_\e (y,y_{n+1}), 
\end{equation}
where $f\in L^2(\R^n)$ and $\varphi_0$ and $K$ are defined as in Section \ref{sec-intro}.
Consider the following maximal operator
\begin{equation}\label{max-oper-T}
T_\lambda^* f(x,x_{n+1}) : = \sup\limits_{ 0<\e \leq 1 } | T_\lambda^\e f(x,x_{n+1}) |, \qquad (x,x_{n+1}) \in \R^n\times \R.
\end{equation}
Similar type of maximal operators related to integral operators
for a parameterized family of smooth surfaces had been considered in \cite{Sogge-Stein}.
More precisely \cite{Sogge-Stein} deals with maximal operator corresponding to $L^p$-averaging operators defined on a family of surfaces converging to
a smooth immersed surface with Gaussian curvature nowhere vanishing of infinite order.
While the idea of taking the pointwise upper bound with respect to the family of hypersurfaces is the same as in \cite{Sogge-Stein},
the problem and the analysis discussed here  are completely different from \cite{Sogge-Stein}.
Our main result concerning (\ref{max-oper-T}) is the following $L^2(\R^n) \mapsto L^2(\R^n) $ bound.

\begin{theorem}\label{Thm-bdd-maximal-op}
For $\gamma> 3/2 $ and $1\leq m<n$, we have $ || T_\lambda^* ||  \lesssim \lambda^{  -\frac{ m}{2} \frac{n}{n+2}    } $.
\end{theorem}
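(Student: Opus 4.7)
The plan is a Sobolev-in-$\e$ argument combined with the uniform individual bound coming from Theorem~\ref{Thm-main-on-T-lambda}. Set $\psi_\e(y):=\e^\gamma \psi(y/\e)$. Since $\gamma>3/2>1$, both $\|\psi_\e\|_\infty$ and $\|\nabla\psi_\e\|_\infty$ are uniformly bounded for $0<\e\le 1$, so Theorem~\ref{Thm-main-on-T-lambda} gives
\begin{equation*}
\|T_\lambda^\e\|_{L^2\to L^2}\lesssim \lambda^{-\frac{mn}{2(n+1)}}
\end{equation*}
uniformly in $\e$. Because the individual rate $\lambda^{-mn/(2(n+1))}$ already beats the target $\lambda^{-mn/(2(n+2))}$, the entire task is to pay for taking a pointwise supremum.

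By the fundamental theorem of calculus applied pointwise in $(x,x_{n+1})$, for every $0<\e\le 1$,
\begin{equation*}
|T_\lambda^* f|^2\le |T_\lambda^1 f|^2 + 2\int_0^1 |T_\lambda^\e f|\,|\partial_\e T_\lambda^\e f|\,d\e.
\end{equation*}
Integrating in $(x,x_{n+1})$ and using Cauchy--Schwarz in the inner $x$-integral yields
\begin{equation*}
\|T_\lambda^* f\|_2^2 \le \|T_\lambda^1 f\|_2^2 + 2\int_0^1 \|T_\lambda^\e f\|_2\,\|\partial_\e T_\lambda^\e f\|_2\,d\e,
\end{equation*}
so everything reduces to estimating $\|\partial_\e T_\lambda^\e\|_{L^2\to L^2}$ by an $\e$-integrable function times a suitable negative power of $\lambda$.

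Differentiating under the integral sign produces
\begin{equation*}
\partial_\e T_\lambda^\e f = \int_{\R^n} e^{i\lambda x\cdot y}\varphi_0\Bigl[-\partial_\e\psi_\e\,(\partial_{n+1}K)\,J_\e + K\,\partial_\e J_\e\Bigr]f(y)\,dy,
\end{equation*}
with $K$ and $\partial_{n+1}K$ evaluated at $(x-y,x_{n+1}-\psi_\e(y))$ and $J_\e = (1+|\nabla\psi_\e|^2)^{1/2}$. From $\psi_\e(y)=\e^\gamma\psi(y/\e)$ one computes, on $\supp\varphi_0$,
\begin{equation*}
|\partial_\e\psi_\e(y)|\lesssim \e^{\gamma-2},\qquad |\partial_\e J_\e(y)|\lesssim \e^{2\gamma-4}.
\end{equation*}
Rerunning the analysis of Section~\ref{sec-est-of-T-lambda} with the derivative kernel $\partial_{n+1}K$ (which satisfies (A2) with $m$ replaced by $m-1$) in place of $K$, and treating $\partial_\e\psi_\e$, $\partial_\e J_\e$ as bounded weights of the indicated size, should deliver a bound of the form
\begin{equation*}
\|\partial_\e T_\lambda^\e\|_{L^2\to L^2}\lesssim (\e^{\gamma-2}+\e^{2\gamma-4})\,\lambda^{-\delta}
\end{equation*}
for a suitable $\delta>0$. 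The hypothesis $\gamma>3/2$ is precisely what makes $\e^{\gamma-2}+\e^{2\gamma-4}$ integrable on $(0,1)$ (the worse singularity being $\e^{2\gamma-4}$, critical at $\gamma=3/2$). Plugging these bounds into the display above and optimizing the exponent then yields the target rate $\|T_\lambda^*\|\lesssim \lambda^{-mn/(2(n+2))}$.

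The main obstacle is the derivative estimate, especially in the borderline case $m=1$: there $\partial_{n+1}K$ is a genuine Calder\'on--Zygmund-type kernel of order $0$, and Theorem~\ref{Thm-main-on-T-lambda} cannot be invoked as a black box (its proof uses $m>0$ essentially, to sum a geometric series in Lemma \ref{Lem-A-lambda}). One has to go inside Section~\ref{sec-est-of-T-lambda}, redo the $A_\lambda+B_\lambda$ decomposition with the modified kernel, and this time choose the cutoff parameter $\beta$ to land on the exponent $n/(n+2)$ rather than the individually optimal $n/(n+1)$; this is the source of the degradation from $n/(n+1)$ to $n/(n+2)$ in the statement. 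A minor secondary point is that the $\partial_\e J_\e$ contribution is only $L^1$-integrable in $\e$ under $\gamma>3/2$, which is why the $L^1$-in-$\e$ form of the Sobolev inequality above is used rather than the tighter $L^2$-in-$\e$ version.
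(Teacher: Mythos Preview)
Your overall strategy—control the supremum in $\e$ by the fundamental theorem of calculus and then estimate the $\e$-derivative—is exactly the mechanism the paper uses. The gap is in the order of operations: you apply the FTC to the full operator $T_\lambda^\e$ and only afterwards propose to run the $A_\lambda+B_\lambda$ cutoff on the resulting derivative operator. The paper does the opposite, and the order matters.

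If you differentiate $T_\lambda^\e$ first, then $\partial_\e T_\lambda^\e$ contains a term with kernel $\partial_{n+1}K(x-y,x_{n+1}-\psi_\e(y))$ satisfying (A2) only with exponent $m-1$. When you now try the $A_\lambda+B_\lambda$ split on this, the near-diagonal piece requires Lemma~\ref{Lem-A-lambda} with $m-1$ in place of $m$. For $m=1$ that sum is $\sum_{2^{-k}\le 4\lambda^{-\beta}}2^{0}$, which is $\sim\log\lambda$, not decaying; worse, the pointwise bound $\int_{|x-y|\le 2\lambda^{-\beta}}|x-y|^{-n}|f(y)|\,dy$ is not even controlled by the maximal function, so $\|\partial_\e T_\lambda^\e\|$ need not be finite uniformly in $x_{n+1}$. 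Your acknowledgement of the $m=1$ borderline is correct, but ``redo the decomposition with the modified kernel'' does not cure it: the obstruction is the $A_\lambda$ part of the \emph{derivative} operator, and no choice of $\beta$ removes it. Even for $m>1$ your route gives, after the Cauchy--Schwarz product $\|T_\lambda^\e\|\cdot\|\partial_\e T_\lambda^\e\|$, the exponent $-\tfrac{(2m-1)n}{4(n+1)}$, which is strictly worse than the target $-\tfrac{mn}{2(n+2)}$ whenever $m<\tfrac{n+2}{2}$.

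The paper avoids all of this by splitting $T_\lambda^\e=A_\lambda^\e+B_\lambda^\e$ \emph{before} differentiating. The near-diagonal bound in Lemma~\ref{Lem-A-lambda} is already $\e$-uniform (it comes from the pointwise maximal function estimate and uses only $\|\nabla\psi_\e\|_\infty\lesssim 1$), so $\|A_\lambda^*\|\lesssim\lambda^{-m\beta}$ with no FTC needed. The FTC is then applied only to $B_\lambda^\e$, which carries the cutoff $|x-y|\ge\lambda^{-\beta}$; thus $\partial_\e B_\lambda^\e$ never sees $\partial_{n+1}K$ near the diagonal, and one may invoke Lemma~\ref{Lem-fixed-surface}/Proposition~\ref{Thm-B-lambda-0} with $m\to m-1$ to get $\|\partial_\e B_\lambda^\e\|\lesssim (\text{integrable in }\e)\cdot\lambda^{n(\beta-1/2)-m\beta+2\beta}$. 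Balancing against $\lambda^{-m\beta}$ gives $\beta=\tfrac{n}{2(n+2)}$ and the stated rate. Incidentally the paper uses the linear Minkowski form $|B_\lambda^*f|\le|B_\lambda^0 f|+\int_0^1|\partial_\e B_\lambda^\e f|\,d\e$ rather than your quadratic version; once the derivative is localized away from the diagonal the two are equivalent, but the linear form is all that is needed.
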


\bigskip

As for $T_\lambda$ defined by (\ref{def-T-lambda}), start
by fixing a smooth function $\varphi :\R^n \to [0,1] $ such that $ \varphi(y )=0 $ for $|y |\leq 1$
and $\varphi(y )=1$ for $|y |\geq 2$. Fix also a parameter $0<\beta<1$ that will be specified in the proof of Theorem \ref{Thm-bdd-maximal-op} below.
We have the following decomposition
$$
T_\lambda^\e f = A_\lambda^\e f + B_\lambda^\e f ,
$$
where 
$$
A_\lambda^\e f(x,x_{n+1}) = \int\limits_{\Gamma_\e} e^{ i\lambda x \cdot y } 
\varphi_0( (x,x_{n+1}) , y ) K(x-y,x_{n+1} - y_{n+1}  ) [ 1-\varphi((x -y ) \lambda^\beta )]  f( y ) d \sigma_\e(y).
$$
Clearly we get
$$
T_\lambda^* f(x,x_{n+1}) \leq A^*_\lambda f(x,x_{n+1}) + B_\lambda^* f(x,x_{n+1}),
$$
where the maximal operators on the right-hand side are defined in analogy with (\ref{max-oper-T}).

\begin{lem}\label{Lem-A-lambda-max}
For any $\lambda \geq 1$, any $\gamma \geq 1$, and any $ \beta \geq 0 $ one has
$$
|| A_\lambda^* ||  \lesssim  [ 1+ || \nabla  \psi ||_{L^\infty(\R^n )} ]   \lambda^{-m \beta}.
$$
\end{lem}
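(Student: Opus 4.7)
The plan is to observe that the pointwise bound underlying the proof of Lemma \ref{Lem-A-lambda} is in fact uniform in $\e \in (0,1]$, so it survives the supremum defining $A_\lambda^*$. No new idea is required beyond tracking $\e$-uniformity, which is immediate from the scaling $\e^\gamma \psi(\cdot/\e)$ once $\gamma \geq 1$.

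First I would parameterize $\Gamma_\e$ by $y \mapsto (y, \e^\gamma \psi(y/\e))$. A direct computation gives $d\sigma_\e(y) = (1 + \e^{2\gamma-2} |(\nabla \psi)(y/\e)|^2)^{1/2}\, dy$, and for $\gamma \geq 1$ and $0 < \e \leq 1$ the factor $\e^{2\gamma - 2} \leq 1$, so the density is dominated by $(1 + ||\nabla \psi||_{L^\infty(\R^n)}^2)^{1/2} \lesssim 1 + ||\nabla \psi||_{L^\infty(\R^n)}$, uniformly in $\e$.

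Next I would use assumption (A2) for $K$. Since $m < n$, the bound $|K(z)| \lesssim |z|^{m-n}$ is a decreasing function of $|z|$, and $|(x-y, x_{n+1} - \e^\gamma \psi(y/\e))| \geq |x-y|$, hence
$$|K(x-y, x_{n+1} - \e^\gamma \psi(y/\e))| \lesssim |x-y|^{m-n},$$
uniformly in $\e$ and $x_{n+1}$. Combining this with boundedness of $\varphi_0$ and the fact that the cutoff $1 - \varphi((x-y)\lambda^\beta)$ is supported in $\{|x-y| \leq 2 \lambda^{-\beta}\}$, I obtain
$$|A_\lambda^\e f(x, x_{n+1})| \lesssim [1 + ||\nabla \psi||_{L^\infty(\R^n)}] \int\limits_{|x - y| \leq 2\lambda^{-\beta}} \frac{|f(y)|}{|x-y|^{n-m}}\, dy,$$
with the implicit constant independent of $\e$ and $x_{n+1}$.

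Finally, the dyadic decomposition used in Lemma \ref{Lem-A-lambda} bounds the integral on the right by $\lambda^{-m\beta} \mathcal{M} f(x)$, where $\mathcal{M}$ is the Hardy--Littlewood maximal function. Since this bound is uniform in $\e$, taking the supremum yields
$$A_\lambda^* f(x, x_{n+1}) \lesssim [1 + ||\nabla \psi||_{L^\infty(\R^n)}] \, \lambda^{-m\beta} \, \mathcal{M} f(x),$$
and the strong $(2,2)$ bound for $\mathcal{M}$ closes the argument. There is no real obstacle here, since the estimate in Lemma \ref{Lem-A-lambda} never used any $\e$-dependent cancellation; the only delicate point is confirming that the surface element stays bounded after scaling, which is exactly where the hypothesis $\gamma \geq 1$ enters (satisfied a fortiori in the regime $\gamma > 3/2$ of Theorem \ref{Thm-bdd-maximal-op}).
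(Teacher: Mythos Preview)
Your argument is correct and is exactly the approach the paper takes: its proof reads in full ``We use the fact that $\gamma\geq 1$ and repeat the proof of Lemma \ref{Lem-A-lambda}.'' You have simply made explicit the one place where $\gamma\geq 1$ enters, namely the uniform bound $\e^{2\gamma-2}\leq 1$ on the surface-measure density.
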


\begin{proof}
We use the fact that $\gamma\geq 1$ and repeat the proof of Lemma \ref{Lem-A-lambda}.
\end{proof}

\begin{proof}[Proof of Theorem \ref{Thm-bdd-maximal-op}]
For fixed $\e>0$ and $(x,x_{n+1}) \in \R^n\times \R$ we have
\begin{equation}\label{B-integral}
B_\lambda^\e f(x,x_{n+1}) = B_\lambda^0 f(x,x_{n+1}) + \int\limits_0^\e \frac{d}{d \tau} B_\lambda^\tau f(x,x_{n+1})   d\tau,
\end{equation}
where $B_\lambda^0 f(x,x_{n+1}) = \lim\limits_{\e \to 0} B_\lambda^\e f(x,x_{n+1}) $. 
Since $\gamma>1$ this limit exists, as well as the differential with respect to $\tau$ in (\ref{B-integral}).
By (\ref{B-integral}) we have
$$
|B_\lambda^* f | \leq  | B_\lambda^0 f  |  + \int\limits_0^1 \left| \frac{d}{d \e} B_\lambda^\tau f     \right| d\e,
$$
from which, using Minkowski's inequality for integrals we get
\begin{equation}\label{B-lambda-star}
 || B_\lambda^* ||  \lesssim || B_\lambda^0 ||  + 
 \int\limits_0^1 \left| \left|  \frac{d}{d \e} B_\lambda^\e  \right| \right|  d \e.
\end{equation}
Rewriting $B_\lambda^\e$ as volume integral gives
\begin{multline*}
B_\lambda^\e f(x,x_{n+1}) =  \int_{\R^n } e^{ i \lambda x \cdot y   }  
\varphi_0( (x,x_{n+1}), y ) K(x -y , x_{n+1} - \e^{\gamma} \psi(y  / \e) )
\times  \\  \varphi((x -y ) \lambda^\beta )  ( 1+| \e^{\gamma-1} (\nabla  \psi) ( y / \e ) |^2  )^{1/2}   f(y ) d y ,
\end{multline*}
from where we obtain 
\begin{equation}\label{B-lambda-0}
 B_\lambda^0 f(x,x_{n+1}) = \int_{ \R^n  }
 e^{ i \lambda x \cdot y   } \varphi_0( (x, x_{n+1}), y ) K(x-y, x_{n+1}   ) \varphi((x -y ) \lambda^\beta ) f(y ) d y .
\end{equation}
By (\ref{B-lambda-0}) and Lemma \ref{Lem-fixed-surface} we have
\begin{equation}\label{B-lambda-0-norm-est}
 || B_\lambda^0||  \lesssim \lambda^{ n(\beta - 1/2) - m\beta   }.
\end{equation}
For the derivative of $B_\lambda^\e$ with respect to $\e$ one has
$$
\frac{d}{d \e}  (B_\lambda^\e f(x,x_{n+1}) )  =    \mathcal{A}_\lambda^\e f( x,x_{n+1} ) + \mathcal{B}_\lambda^\e f( x,x_{n+1} ),
$$
where $\mathcal{A}_\lambda^\e$ contains the
derivatives of $( 1+| \e^{\gamma-1} (\nabla  \psi) ( y/ \e ) |^2  )^{1/2}$, and in $\mathcal{B}_\lambda^\e$
we collect the differential of the kernel. We have 
\begin{multline*}
\frac{d}{d \e} \left(  1+    \e^{2\gamma-2}  | (\nabla  \psi) (y/ \e) |^2    \right)^{1/2} = 
\left(  1+   \e^{2\gamma-2} |(\nabla  \psi) (y  / \e) |^2    \right)^{-1/2} \times \\
 \big[ (2\gamma -2) \e^{2\gamma -3}  | (\nabla  \psi) (y  / \e) |^2  - 
2 \e^{ 2\gamma -4  } (\nabla  \psi)^T (y  / \e)  (\mathrm{Hess} \psi) (y  / \e) y      \big].
\end{multline*}
Now if $\gamma>3/2$ by (\ref{B-lambda-norm}) from the last expression we obtain
\begin{equation}\label{A-lambda-eps-est}
 || \mathcal{A}_\lambda^\e   || \lesssim_\psi a(\e) \lambda^{ n(\beta -1/2) -m\beta +\beta },
\end{equation}
where $a(\e)$ is a positive and integrable function on the interval $(0,1)$, and the constant depends
on the bound of derivatives of $\psi$ up to second order. 
We will assume that $\beta< \frac{n}{2(n-m+1)}$ to get a decay in the norm of $\mathcal{A}_\lambda^\e$.

Next, we proceed to the estimate of $\mathcal{B}_\lambda^\e$
which contains the derivative of the kernel $K$. Differentiating the kernel we get
\begin{multline*}
\frac{d}{d \e} K( x  - y , x_{n+1} - \e^\gamma \psi(y  / \e)  ) = 
\partial_{n+1} K( x  - y , x_{n+1} - \e^\gamma \psi(y  / \e)  ) \times \\
[ -\gamma \e^{\gamma-1} \psi(y  / \e) + \e^{\gamma-2} \nabla  \psi(y  / \e) \cdot y  ].
\end{multline*}
Thus we get an operator as in (\ref{B-lambda-norm}) however the kernel here has higher singularity.
Applying the estimate (\ref{B-lambda-norm}) with $m$ replaced by $m-1$ implies
\begin{equation}\label{B-lambda-eps-est}
 || \mathcal{B}_\lambda^\e ||  \lesssim_\psi b(\e) \lambda^{ n(\beta -1/2) - m\beta + 2\beta },
\end{equation}
which gives a decay if 
\begin{equation}\label{beta-in-B-lambda-eps}
0<\beta<\frac{n }{2(n-m+2)}.
\end{equation}
Here as well, $b(\e)$ is a positive and integrable function on $(0,1)$.

Now combining estimates (\ref{A-lambda-eps-est}) and (\ref{B-lambda-eps-est}), along with the
estimate of $B_\lambda^0$ given by (\ref{B-lambda-0-norm-est}), from (\ref{B-lambda-star}) we obtain
\begin{equation}\label{B-star-final}
 || B_\lambda^* ||  \lesssim_\psi   \lambda^{ n(\beta -1/2) - m\beta + 2\beta  },
\end{equation}
which gives a decay if $0<\beta<1$ satisfies
\begin{equation}\label{beta-min}
 0< \beta <\frac{n}{ 2(n-m+2) }.
\end{equation}
Putting together Lemma \ref{Lem-A-lambda-max} and estimate (\ref{B-star-final}) we get
\begin{equation}\label{B-lambda-final-2}
 || T_\lambda^*||  \lesssim_\psi  \lambda^{ -m\beta  } + \lambda^{ n(\beta -1/2) - m\beta + 2 \beta   } ,
\end{equation}
which holds under the condition (\ref{beta-min}). Optimizing in $\beta$ gives $\beta= \frac{n}{2(n+2)}$ which
satisfies (\ref{beta-min}). For this choice of $\beta$ the proof of the Theorem is complete.
\end{proof}


\section{Nonlinear phase}\label{sec-nonlinear}

In this section we will consider one particular case of a nonlinear phase function with an application to Helmholtz equation.
Let $\psi_0 \in C_0^\infty(\R^n \times \R^{n-1} )$ and set $K(z) = |z|^{-(n-m-1)}$, $z\in \R^n\setminus\{0\}$,
where $0<m<n-1$, and $n\geq 2$. For $f \in L^2(\R^{n-1})$ consider the operator
\begin{equation}\label{def-of-T-Helmholtz}
T_\lambda f(x) = \int\limits_{\R^{n-1}} e^{i\lambda |x - (y',0)|^\gamma } \psi_0 (x, y') K \big( x- (y',0) \big) f(y') dy', 
\end{equation}
for $x\in \R^n $, $\gamma\geq 1$,  and $\lambda \geq 2$. We shall study decay of the norm of $T_\lambda$
as an operator from $L^2 (\R^{n-1})$ to $L^2(\R^n)$. In this section we denote this norm by $|| T_\lambda||$,
and have the following result.

\begin{theorem}\label{Thm-Helmholtz}
Set $\alpha= (n-1)/2$, and assume $\gamma>1$. Then
one has
$$ 
|| T_\lambda ||  \lesssim 
 \begin{cases}
                                 \lambda^{-\frac{1}{\gamma} (m+1/2)}, &\text{  $m<\gamma \alpha - 1/2$}, \\
                                 \lambda^{-\alpha} \log \lambda, &\text{  $m=\gamma \alpha - 1/2$}, \\
                                 \lambda^{-\alpha}, &\text{  $m>\gamma \alpha - 1/2$} .
                            \end{cases}
$$
\end{theorem}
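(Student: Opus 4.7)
Set $\rho(x,y')=|x-(y',0)|$ and introduce the critical scale $\delta=\lambda^{-1/\gamma}$, the value of $\rho$ at which the phase $\lambda\rho^\gamma$ transitions from bounded to genuinely oscillatory. My plan is to split $T_\lambda=T_\lambda^{\text{loc}}+T_\lambda^{\text{osc}}$ using a smooth cut-off of $\rho$ at this threshold. On the local piece, the phase is harmless and I apply Schur's test to the kernel dominated by $\rho^{-(n-m-1)}\mathbf{1}_{\rho\lesssim\delta}$. A direct computation in spherical coordinates (using $m\geq 1$ so both integrals converge) gives
$$\sup_x\int \rho^{-(n-m-1)}\mathbf{1}_{\rho\lesssim\delta}\,dy'\lesssim\int_0^\delta r^{m-1}\,dr\lesssim\delta^m,\qquad \sup_{y'}\int \rho^{-(n-m-1)}\mathbf{1}_{\rho\lesssim\delta}\,dx\lesssim\int_0^\delta r^m\,dr\lesssim\delta^{m+1},$$
whence Schur gives $\|T_\lambda^{\text{loc}}\|\lesssim(\delta^m\cdot\delta^{m+1})^{1/2}=\lambda^{-(m+1/2)/\gamma}$. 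This already accounts for the first of the three cases.

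For the oscillatory part I would further decompose $T_\lambda^{\text{osc}}=\sum_{j=0}^{J}T_\lambda^j$ into dyadic shells $\rho\sim r_j:=2^j\delta$, with $J\sim(1/\gamma)\log_2\lambda$ so that $r_J\sim 1$ (the compact support of $\psi_0$ truncates the sum). On each shell I would work with the $TT^*$ kernel
$$N_j(x,x')=\int_{\R^{n-1}}e^{i\lambda[\rho(x,y')^\gamma-\rho(x',y')^\gamma]}a_j(x,x',y')\,dy',$$
where $|a_j|\lesssim r_j^{-2(n-m-1)}$ on the relevant shell. After rescaling $y'=r_jz'$ (with a matching rescaling of the transverse coordinates $x_n,x'_n$) the effective large parameter becomes $\lambda r_j^\gamma\geq 1$, so stationary phase on the rescaled $(n-1)$-dimensional $z'$ integral yields a gain of $(\lambda r_j^\gamma)^{-(n-1)/2}$ whenever the rescaled phase admits a non-degenerate critical point, and integration by parts gives a complementary decay in the residual separation of $x,x'$ otherwise.

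Combining the stationary-phase bound with the amplitude size and shell volume, and applying Schur's test on $T_\lambda^j(T_\lambda^j)^*$, leads to
$$\|T_\lambda^j\|\lesssim \lambda^{-(m+1/2)/\gamma}\,\theta^j,\qquad \theta:=2^{(2m+1-\gamma(n-1))/2}.$$
Summing the geometric series $\sum_{j=0}^J\theta^j$ against $J\sim(1/\gamma)\log_2\lambda$ produces the trichotomy cleanly: when $m<\gamma\alpha-1/2$ we have $\theta<1$, the series converges, and we recover $\lambda^{-(m+1/2)/\gamma}$, matching the local bound; when $m=\gamma\alpha-1/2$ we have $\theta=1$, each of the $\sim\log\lambda$ terms contributes $\lambda^{-\alpha}$, producing the logarithmic factor; when $m>\gamma\alpha-1/2$ we have $\theta>1$ and the sum is dominated by $j=J$, where $\theta^J=\lambda^{(m+1/2)/\gamma-\alpha}$, the two powers of $\lambda$ collapse, and one is left with $\lambda^{-\alpha}$.

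The hard part will be the stationary-phase analysis on each shell: the phase $\Phi_j(y')=\rho(x,y')^\gamma-\rho(x',y')^\gamma$ has critical points dictated by the nonlinear relation $\rho(x,y')^{\gamma-2}(\bar x-y')=\rho(x',y')^{\gamma-2}(\bar x'-y')$ (genuinely nonlinear for $\gamma\neq 2$ — the case $\gamma=1$ is explicitly excluded by hypothesis), and one must prove non-degeneracy of its Hessian in the transverse directions with constants uniform in the shell scale $r_j$ and the vertical coordinates $x_n,x'_n$. Equally delicate is the matching with the non-oscillatory regime: for $(x,x')$ too close for stationary phase to apply, a trivial $L^\infty$ bound on $N_j$ must be used, and the balance between the two regimes — carried out uniformly across all scales $r_j$ — is what ultimately enforces the sharp exponent $\alpha=(n-1)/2$ in the final estimates.
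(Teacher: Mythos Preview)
Your architecture is correct and matches the paper's: the same dyadic decomposition in $\rho$ (you index outward from $\delta=\lambda^{-1/\gamma}$, the paper indexes inward from scale $1$, but these are the same shells), the same per-shell bound $\|T_\lambda^j\|\lesssim r_j^{m+1/2}\min\bigl(1,\,(\lambda r_j^\gamma)^{-\alpha}\bigr)$, and the same geometric-series trichotomy in $\theta=2^{m+1/2-\gamma\alpha}$.

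The genuine difference is how the oscillatory per-shell bound is obtained. You propose to run $TT^*$ by hand on each shell and analyse the difference phase $\rho(x,y')^\gamma-\rho(x',y')^\gamma$ via stationary phase and integration by parts, flagging (correctly) that the nondegeneracy of its Hessian with uniform constants is the crux. The paper avoids this entirely: it further localizes $f$ at the shell scale via a partition of unity (so that for each $x$ only boundedly many pieces contribute, and Cauchy--Schwarz reduces to a single piece), rescales to unit scale, and then quotes H\"ormander's $L^2$ oscillatory-integral theorem (Stein, p.~377) as a black box. The hypothesis of that theorem is the nonvanishing of the \emph{mixed} Hessian $\det\bigl(\partial^2\Phi/\partial x_i'\partial\xi_j\bigr)$ for $\Phi(x',\xi)=|\xi-(x',0)|^\gamma$, and this is a one-line algebraic computation: the matrix is $-\gamma d^{\gamma-2}$ times a rank-one perturbation of the identity, and Sylvester's determinant identity gives $\det=(\gamma-1)|\xi'-x'|^2/d^2+\xi_n^2/d^2>0$ for $\gamma>1$. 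The extra $2^{-k/2}$ (your $r_j^{1/2}$) then drops out of a trivial $\xi_n$-integration over an interval of unit length in rescaled coordinates. This route is materially shorter: it replaces your open-ended stationary-phase analysis of a two-point phase by a single determinant check plus a citation, and it makes transparent exactly where the hypothesis $\gamma>1$ enters.
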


\bigskip

\subsection{Preliminaries}

Before proving Theorem \ref{Thm-Helmholtz} we shall make some preliminary observations.
We assume $\gamma\geq 1$ and let
$$
\Phi(x',\xi) = | \xi - (x',0) |^\gamma = ( |\xi'  - x'|^2 + \xi_n^2 )^{\gamma /2} = d^\gamma,
$$
where $d =( |\xi'  - x'|^2 + \xi_n^2 )^{1/2}$, $1/2\leq d \leq 2$,  $\xi=(\xi', \xi_n)\in \R^{n-1}\times \R$,
and $x'=(x_1,...,x_{n-1})$. We have
$$
\frac{\partial^2 \Phi}{\partial x_i \partial \xi_j} = - \gamma d^{\gamma-2} \left( \delta_{ij} + 
(\gamma-2) \frac{x_i - \xi_i}{d}  \frac{x_j - \xi_j}{d}  \right),
$$
for $1\leq i,j \leq n-1$, where $\delta_{ij}$ is the Kronecker symbol. Setting
$a_i = (x_i - \xi_i)/d$ for $i=1,2,...,n-1$ we have
$$
\frac{\partial^2 \Phi}{\partial x_i \partial \xi_j} =  -\gamma d^{\gamma -2} \left( \delta_{ij} + (\gamma-2) a_i a_j \right).
$$
We set $D: = \mathrm{det} ( \delta_{ij} + (\gamma-2) a_i a_j )_{i,j=1}^{n-1}$.
The determinant here can be computed, for example, by Sylvester's determinant theorem (cf. ``matrix determinant lemma''), which states that
 $\mathrm{det} (I_k + AB )= \mathrm{det}(I_p + BA)$,
where matrices $A,B$ have dimensions respectively $k\times p$ and $p \times k$,
and $I_k$ and $I_p$ are correspondingly $k\times k$, and $p\times p$ identity matrices.
Applying this identity to $D$ we obtain
$D = 1+ (\gamma-2) \sum_{i=1}^{n-1} a_i^2$.
Thus
\begin{multline*}
D=1+(\gamma-2) \frac{|\xi' - x'|^2}{d^2} = \frac{|\xi' - x'|^2 + \xi_n^2 + (\gamma-2)|\xi' - x'|^2}{d^2} = \\
\frac{(\gamma-1) |\xi' - x'|^2 + \xi_n^2 }{d^2},
\end{multline*}
from which we conclude that
\begin{equation}\label{gamma-larger-1}
 \text{for } \gamma>1 \text{ one has } D\geq c>0 \text{ for }  \frac 12 \leq d \leq 2,
\end{equation}
and
\begin{equation}\label{gamma-is-1}
 \text{for } \gamma=1 \text{ one has } D \geq c>0 \text{ for } \frac 12 \leq d \leq 2 \text{ and } |\xi_n|\geq c_1 >0.
\end{equation}
For the proof of Theorem \ref{Thm-Helmholtz} we will use the following result.
\begin{theorem}\label{Thm-Stein}{\normalfont(see Stein \cite{Stein}, p. 377)}
Let $\psi_1 \in C_0^\infty(\R^n \times \R^n)$ and $\lambda>0$ and let $\Phi$ be real-valued and smooth.
Set
$$
\mathcal{U}_\lambda f(\xi) = \int_{\R^n} e^{i \lambda \Phi(x, \xi)} \psi_1(x,\xi) f(x) d x , \qquad \xi \in \R^n,
$$
and assume that $\mathrm{det} \left(  \frac{\partial^2 \Phi(x, \xi)}{ \partial x_i \partial \xi_j } \right) \neq 0$
on the support of $\psi_1$. Then one has
$$
|| \mathcal{U}_\lambda f ||_{L^2(\R^n) } \leq C \lambda^{-n/2} || f ||_{L^2(\R^n)}.
$$
\end{theorem}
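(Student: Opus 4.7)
The plan is to apply H\"ormander's $TT^*$ method: prove that $\|\mathcal{U}_\lambda\mathcal{U}_\lambda^*\|_{L^2\to L^2}\lesssim \lambda^{-n}$ and deduce the theorem from $\|\mathcal{U}_\lambda\|^2=\|\mathcal{U}_\lambda\mathcal{U}_\lambda^*\|$. A direct computation gives the kernel
$$K_\lambda(\xi,\eta)=\int_{\R^n}e^{i\lambda[\Phi(x,\xi)-\Phi(x,\eta)]}\psi_1(x,\xi)\overline{\psi_1(x,\eta)}\,dx,$$
which I would estimate by non-stationary phase in the $x$-variable and then control by Schur's test. The trivial bound $|K_\lambda|\lesssim 1$ comes for free from the compact support of $\psi_1$; the work is to show sharp decay in $\lambda|\xi-\eta|$.

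The analytic heart is the lower bound
$$|\nabla_x\Phi(x,\xi)-\nabla_x\Phi(x,\eta)|\gtrsim |\xi-\eta|$$
on the support of $\psi_1(x,\xi)\overline{\psi_1(x,\eta)}$. To obtain it, I would first use compactness of $\supp\psi_1$ together with a smooth partition of unity to reduce to the case where $\psi_1$ is supported in a ball of small radius $\delta$ around a fixed point $(x_0,\xi_0)$ where, by hypothesis, the mixed Hessian $\Phi''_{x\xi}(x_0,\xi_0)$ is invertible. Writing
$$\nabla_x\Phi(x,\xi)-\nabla_x\Phi(x,\eta)=\left[\int_0^1 \Phi''_{x\xi}(x,\eta+t(\xi-\eta))\,dt\right](\xi-\eta),$$
smoothness of $\Phi$ guarantees that for $\delta$ sufficiently small the matrix in brackets is a small perturbation of $\Phi''_{x\xi}(x_0,\xi_0)$, hence uniformly invertible, which delivers the lower bound.

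With this in hand, the first-order differential operator
$$L=\frac{\nabla_x[\Phi(x,\xi)-\Phi(x,\eta)]}{i\lambda|\nabla_x[\Phi(x,\xi)-\Phi(x,\eta)]|^2}\cdot\nabla_x$$
satisfies $L\bigl(e^{i\lambda[\Phi(x,\xi)-\Phi(x,\eta)]}\bigr)=e^{i\lambda[\Phi(x,\xi)-\Phi(x,\eta)]}$. Integrating by parts $N$ times, each application of $L^*$ produces a factor bounded by $C(\lambda|\xi-\eta|)^{-1}$ uniformly on the support (derivatives of $\psi_1$ and $\Phi$ being absorbed into the implicit constants), so that
$$|K_\lambda(\xi,\eta)|\lesssim_N (1+\lambda|\xi-\eta|)^{-N}.$$
Choosing any $N>n$ and changing variables $u=\lambda(\xi-\eta)$,
$$\sup_\xi\int_{\R^n}|K_\lambda(\xi,\eta)|\,d\eta+\sup_\eta\int_{\R^n}|K_\lambda(\xi,\eta)|\,d\xi\lesssim \lambda^{-n},$$
and Schur's test gives $\|\mathcal{U}_\lambda\mathcal{U}_\lambda^*\|\lesssim\lambda^{-n}$, completing the argument.

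The main obstacle is the uniform lower bound on $|\nabla_x\Phi(x,\xi)-\nabla_x\Phi(x,\eta)|$: the non-degeneracy hypothesis is pointwise, but what is needed is a quantitative comparison to $|\xi-\eta|$ valid uniformly on the full support of the kernel. Applied directly, the mean-value identity above involves an average of $\Phi''_{x\xi}$ along the segment from $\eta$ to $\xi$, and for $\xi,\eta$ far apart in $\supp\psi_1$ this average may be singular; the partition-of-unity localization is exactly what forces the average to remain close to a fixed invertible matrix and makes the rest of the proof routine.
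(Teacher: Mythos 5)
Your proposal is correct and is precisely the standard $TT^*$ argument that the paper invokes by citing Stein's book (the cited page contains exactly this proof): compute the kernel of $\mathcal{U}_\lambda\mathcal{U}_\lambda^*$, establish the lower bound $|\nabla_x\Phi(x,\xi)-\nabla_x\Phi(x,\eta)|\gtrsim|\xi-\eta|$ by the mean-value identity after a finite partition-of-unity localization to small balls where the mixed Hessian stays near a fixed invertible matrix, integrate by parts $N>n$ times via the usual non-stationary-phase operator, and close with Schur's test. You have also correctly identified the one step that requires care — the uniformity of the lower bound coming from convexity of the small balls and compactness of $\supp\psi_1$ — so the argument is complete.
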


\noindent We next give the proof of the main result of this section.
\bigskip

\subsection{Proof of Theorem \ref{Thm-Helmholtz}} 
 We start with a decomposition of the kernel $K$. Following Stein \cite{Stein} page 393,
there exists a function $\psi \in C_0^\infty (\R^n)$ such that
$\mathrm{supp} \psi \subset \{ x\in \R^n: \ \frac 12 \leq |x| \leq 2 \}$, and
$$
K(z) = \sum\limits_{k=-\infty}^\infty 2^{k (n-1-m)} \psi  (  2^k z  ).
$$
As such function one may take $\psi(z) = |z|^{-(n-1-m)} [ \eta( z ) - \eta( 2 z) ] $ where $\eta$
is a smooth function satisfying $\eta(y) =1 $ for $|y| \leq 1$ and $\eta (y) =0$ for $|y| \geq 2$.
It is clear that there is $k_0\in \Z$ such that
$$
K(z) = \sum\limits_{k=k_0}^\infty 2^{k (n-1-m)} \psi (  2^k z ), \text{ for } z=x-(y',0) \text{ where } (x,y')\in \mathrm{supp} \psi_0.
$$
We will assume that $k_0=0$, the proof in the general case is the same. We get
$$
T_\lambda f= \sum\limits_{k=0}^\infty T_{\lambda, k } f ,
$$
where  
$$
T_{\lambda, k} f(x) = \int\limits_{\R^{n-1}} e^{ i \lambda | x- (y',0) |^\gamma } \psi_0(x, y') 2^{ k(n-1-m) } 
\psi \big( 2^k ( x-  (y',0) ) \big) f(y') dy' .
$$
From an application of Hardy-Littlewood's maximal operator (see the proof of Lemma \ref{Lem-A-lambda})
we have $ || T_{\lambda, k}  || = || T_{\lambda, k} ||_{L^2(\R^{n-1}) \mapsto L^2(\R^n)}  \leq C 2^{-mk}$.
Here we shall improve this estimate.

Take $\varphi \in C_0^\infty (\R) $ such that $\mathrm{supp} \varphi \subset [ -\frac 12 - \frac{1}{10},  \frac 12 + \frac{1}{10} ]$
and $\sum\limits_{j=-\infty }^\infty \varphi( t-j )=1$. Set $\varphi_j (t) = \varphi (t-j)$, and for $t=(t_1,...,t_{n-1})$ define
$ \chi ( t ) := \varphi  (t_1 ) \varphi  (t_2  )\cdot ... \cdot  \varphi (t_{n-1}) $.
Then, for $j=(j_1,...,j_{n-1})\in \Z^{n-1}$ set
$$
\chi_j( t ) := \chi(t-j) =   \varphi_{j_1} (t_1 ) \varphi_{j_2} (t_2  )\cdot ... \cdot  \varphi_{j_{n-1}} (t_{n-1}).
$$
Clearly $\sum\limits_{j\in \Z^{n-1}} \chi_j(t)=1$, and
$$
1 = \sum\limits_{j\in \Z^{n-1}} \chi_j (2^k t) = \sum\limits_{j\in \Z^{n-1}} \chi ( 2^k t -j  ) = 
\sum\limits_{j\in \Z^{n-1}} \chi \big( 2^k ( t -2^{-k} j )   \big).
$$
Setting $f_j (t): = f( t ) \chi \big( 2^k ( t-2^{-k} j ) \big) $, implies
$$
f = \sum\limits_{ j\in \Z^{n-1} } f_j.
$$
We get the following decomposition
$$
T_{\lambda, k} f (x) = \sum\limits_{ j\in \Z^{n-1} } T_{\lambda, k} f_j (x),
$$
and for every $x$ the sum has only a bounded number of non-vanishing terms. Therefore by Cauchy-Schwarz we obtain
\begin{equation}\label{Cauchy-Schwarz}
 | T_{\lambda,k} f(x)  |^2 \leq C \sum\limits_{ j\in \Z^{n-1} } | T_{\lambda, k} f_j (x) |^2,
\end{equation}
where the constant $C$ is independent of $x$, $\lambda$, and $k$. We have 
\begin{multline*}
T_{ \lambda, k } f_j (x) = \int\limits_{ \R^{n-1} } e^{ i\lambda |x - (y',0) |^\gamma  } \psi_0 (x, y') 2^{ k(n-1-m) }
\psi \big( 2^k ( x- (y', 0) ) \big) f_j (y') dy' = \\ \big[\text{with } y'=2^{-k} z' \big] \ 
2^{-m k } \int\limits_{ \R^{n-1} } e^{ i \lambda | x- 2^{-k} (z', 0) |^\gamma } \psi_0 (x, 2^{-k} z') \psi \big( 2^k x- (z',0) \big) f_j (2^{-k} z') dz',
\end{multline*}
and
$$
f_j( 2^{-k} z' ) = f(  2^{-k} z') \chi \big(  2^k ( 2^{-k} z' - 2^{-k} j ) \big) = f(  2^{-k} z') \chi ( z' - j ).
$$
Hence
\begin{multline*}
 T_{ \lambda, k } f_j (x) = 2^{- m k} \int\limits_{ \R^{n-1} } e^{ i \lambda 2^{-k \gamma} | 2^k x - (z',0) |^\gamma }
 \psi_0 (x, 2^{-k} z' ) \psi \big( 2^k x - (z',0) \big) f(2^{-k} z' ) \chi( z' -j ) dz' = \\
 \big[ \text{with } y'= z' -j \big]  \ 
 2^{-m k} \int\limits_{ \R^{n-1} } e^{ i\lambda 2^{-k \gamma} | 2^k x - (y' +j,0) |^\gamma  } \psi_0 \big( x, 2^{-k} (y'+j) \big)
 \psi \big( 2^k x- (y'+j,0) \big) \times \\ f \big( 2^{-k} (y'+j) \big) \chi(y') dy' =  2^{-m k } 
\int\limits_{ \R^{n-1} } e^{ i \lambda 2^{-k \gamma} \left| 2^k ( x- (2^{-k}j,0) ) - (y',0) \right|^\gamma  } \psi_0 (x, 2^{-k } j+ 2^{-k} y')
\times \\
 \psi \big(2^k (x- (2^{-k} j,0) ) -(y',0) \big) f(  2^{-k} j + 2^{-k} y' ) \chi(y') d y'.
\end{multline*}
We also have
\begin{multline*}
 \int\limits_{ \R^n } | T_{\lambda,k} f_j (x)  |^2 dx  = \big[ \text{with } x= u+(2^{-k} j, 0) \big] \\
 \int\limits_{ \R^n } \left| T_{\lambda,k} f_j \big( u+ (2^{-k} j, 0 ) \big) \right|^2 d u =  \big[ \text{with } \xi=  2^k u \big] \\
 2^{-k n} \int\limits_{\R^n }  \left| T_{\lambda,k} f_j \big( 2^{-k} \xi + (2^{-k}j, 0) \big)  \right|^2 d \xi .
\end{multline*}
Now let $\widetilde{\chi} \in C_0^\infty (\R^{n-1})$ be so that $\widetilde{\chi} =1$ on $\mathrm{supp} \chi$
and $\mathrm{supp} \widetilde{\chi} \subset [-1,1]^{n-1}$. We then have
\begin{multline*}
 T_{\lambda, k } f_j \big( 2^{-k} \xi + ( 2^{-k}j,0 )  \big) = 
 2^{- m k} \int\limits_{ \R^{n-1} } e^{ i \lambda 2^{-k \gamma} | \xi - (y',0) |^\gamma } \psi_0( 2^{-k} \xi +(2^{-k}j, 0 ), 2^{-k} j + 2^{-k} y' ) \times
 \\ \psi \big( \xi - (y',0) \big)  f( 2^{-k} j + 2^{-k} y' ) \chi(y') \widetilde{\chi}(y') dy' = 
 2^{-m k } \int\limits_{ \R^{n-1} } e^{ i\lambda 2^{-k \gamma} \Phi (y', \xi) } \psi_1 (y', \xi) g(y') dy' := \\ 2^{- m k} \mathcal{U}_{ \lambda 2^{-k \gamma} } g(\xi),
\end{multline*}
where
\begin{equation}\label{Phi-in-thm}
\Phi (y' ,\xi) = | \xi - (y', 0) |^\gamma = \big( |  \xi' - y'|^2 + \xi_n^2 \big)^{\gamma/2},
\end{equation}
$$
\psi_1(y', \xi)  = \psi \big(\xi - (y',0) \big) \psi_0 \big(  2^{-k} \xi + (2^{-k} j, 0), 2^{-k}  j + 2^{-k} y' \big) \widetilde{\chi}(y'),
$$
and
$$
g(y') = f( 2^{-k} j + 2^{-k} y') \chi(y').
$$
It is clear that for $R>0$ large enough, independently of $j$ and $k$, one has
$$
\mathrm{supp} \psi_1 \subset B(0,R) \times B(0,R),
$$
and that the derivatives of $\psi_1$ can be bounded uniformly in $k$ and $j$.
We have
$$
\left( \int_{\R^n} |T_{\lambda, k} f_j(x)|^2  dx \right)^{1/2}  = 2^{-kn/2} 2^{-mk} \left( \int_{\R^n} | \mathcal{U}_{\lambda 2^{-k \gamma}} g(\xi) |^2 d \xi \right)^{1/2} 
$$
and we shall now use Theorem \ref{Thm-Stein}.
It follows from (\ref{gamma-larger-1}) that the determinant condition in Theorem \ref{Thm-Stein}
is satisfied and hence we obtain
$$
\left( \int_{\R^{n-1}} | \mathcal{U}_{\lambda 2^{-k \gamma} } g( \xi', \xi_n ) |^2 d \xi' \right)^{1/2} \leq C (\lambda 2^{-k \gamma})^{-\alpha}
\left( \int_{\R^{n-1}} | g(x') |^2 dx'  \right)^{1/2},
$$
where $\alpha = (n-1)/2$. Integration in $\xi_n$ demonstrates
$$
|| \mathcal{U}_{\lambda 2^{-k \gamma}} g ||_{L^2(\R^n)} \leq C \lambda^{-\alpha} 2^{k \gamma \alpha}  || g ||_{L^2(\R^{n-1})}.
$$
We also have
\begin{multline*}
|| g ||_{L^2(\R^{n-1})}^2 = \int\limits_{ |y'|\leq \sqrt{n} }  | f( 2^{-k} j + 2^{-k} y' ) |^2 dy' = \ [ \text{with } z' = 2^{-k} y' ] \\
2^{ k(n-1) } \int\limits_{ |z'| \leq 2^{-k} \sqrt{n} } | f( 2^{-k} j + z'  ) |^2 dz' .
\end{multline*}
Here $ g= g_{j,k} $ and it is easy to see that
$$
\sum\limits_{j\in \Z^{n-1}} \int_{\R^{n-1}} | g_{j,k}  (y') |^2 d y'  \leq C 2^{ k(n-1) } || f ||_{L^2(\R^{n-1})}^2.
$$
Invoking (\ref{Cauchy-Schwarz}) we obtain
\begin{multline*}
\int_{\R^n} | T_{\lambda,k} f (x) | ^2 dx  \leq C \sum\limits_{j \in \Z^{n-1}} \int_{\R^n} | T_{\lambda, k} f_j  |^2 dx \leq \\
C 2^{-k n } 2^{-2 m k} ( \lambda^{-\alpha} 2^{k \gamma \alpha} )^2 \sum\limits_{j } \int_{\R^{n-1}} |g_{j,k}|^2 dy' \leq \\
C 2^{-k n } 2^{-2 m k} ( \lambda^{-\alpha} 2^{k \gamma \alpha} )^2 2^{ k(n-1) } \int_{\R^{n-1}} |f |^2 dy' 
\end{multline*}
and hence
\begin{equation}\label{T-lambda-k-est}
|| T_{\lambda, k} ||  \leq C 2^{-k/2} 2^{- mk} \lambda^{-\alpha} 2^{k \gamma \alpha}.
\end{equation}
In this estimate $\lambda^{-\alpha} 2^{k \gamma \alpha}$ can be replaced by 1, since we can make a trivial
estimate instead of using Theorem \ref{Thm-Stein}. Thus we also have
\begin{equation}\label{T-lambda-k-est-trivial}
|| T_{\lambda, k} ||  \leq C 2^{-k/2} 2^{- mk} .
\end{equation}
It follows that
$$
|| T_\lambda || \leq \sum\limits_{k=0}^\infty || T_{\lambda, k} || \lesssim \lambda^{-\alpha} 
\sum_{2^k \leq \lambda^{1/\gamma}} 2^{ (\gamma \alpha - m - 1/2 ) k}  + \sum_{ 2^k  \geq \lambda^{1/\gamma}  } 2^{ - (m + 1/2) k }.
$$
For $m< \gamma \alpha -1/2$ we have $\gamma \alpha - m - 1/2>0$ and
$$
|| T_\lambda || \lesssim \lambda^{-\alpha} \lambda^{( \gamma \alpha - m - 1/2 ) / \gamma} + \lambda^{-(m + 1/2 ) / \gamma} \leq C \lambda^{-(m + 1/2 ) / \gamma}.
$$
If $m = \gamma \alpha -1/2$ one has $\gamma \alpha - m - 1/2 =0$ and
$$
|| T_\lambda || \lesssim \lambda^{-\alpha} \log \lambda + \lambda^{-\alpha} \leq C \lambda^{-\alpha} \log \lambda .
$$
Finally, for $m > \gamma \alpha -1/2$ we have $ \gamma \alpha -m -1/2<0$ and
$$
|| T_\lambda || \lesssim  \lambda^{-\alpha} + \lambda^{-(m+1/2)/\gamma}  \leq C \lambda^{-\alpha},
$$
and the proof of the Theorem is complete. \qquad \qquad \qquad \qquad \qquad \qquad \qquad \qquad \qquad $\square$

\subsubsection{\normalfont{\textbf{The case of $\gamma=1$, and the Helmholtz equation.}}}\label{sub-sec-Helmholtz}
Here we set $K(z) = |z|^{-(n-1-m)} \omega(z)$,
where $\omega \in C^\infty(\R^n \setminus \{0\})$, is homogeneous of degree 0,
and $\omega(z) = 0 $ for all $|z|=1$ satisfying $|z_n| \leq \e$, for some given $\e>0$. 
The method of the proof of Theorem \ref{Thm-Helmholtz} combined with 
(\ref{gamma-is-1}) gives
\begin{equation}\label{Helmholtz-est-gamma-1} 
|| T_\lambda ||  \lesssim 
 \begin{cases}
                                 \lambda^{-  (m+1/2)}, &\text{  $m<  n/2 - 1$}, \\
                                 \lambda^{-\alpha} \log \lambda, &\text{  $m=n/2 -1$}, \\
                                 \lambda^{-\alpha}, &\text{  $m> n/2 - 1 $} ,
                            \end{cases}
\end{equation}
with $\alpha=(n-1)/2$.

We now discuss how the analysis can be applied to Helmholtz equation.
In $\R^3$ consider a smooth hypersurface $\Gamma$ given by (\ref{Gamma-surface}),
and fix some $\varphi_0\in C_0^\infty(\R^3)$. Let $\sigma_\Gamma$ be the surface measure
of $\Gamma$, and for a measure $\mu_\Gamma  = \varphi_0(y) d \sigma_\Gamma(y)$ let 
$u_\lambda$ be a solution to the following inhomogeneous Helmholtz equation
\begin{equation}\label{eq-Helmholtz}
\Delta u + \lambda^2 u =  -\mu_\Gamma   \ \ \text{ in } \R^3.
\end{equation}
We denote by $G_\lambda (\cdot, \cdot)$ the fundamental solution (Green's function) of (\ref{eq-Helmholtz}). It is well known that 
$G_\lambda (x,y)=e^{i\lambda |x-y|}/ (4\pi |x-y|)$ where $x,y\in \R^3$, $x\neq y$. Now let $u_\lambda$ be the solution
to (\ref{eq-Helmholtz}) given in terms of the Green's function, namely
\begin{equation}\label{Helm-sol-Green}
u_\lambda (x) = \int_{\R^3} G_\lambda(x,y) d\mu_\Gamma (y).
\end{equation}
This is precisely the solution to (\ref{eq-Helmholtz}) satisfying Sommerfeld radiation condition, which in dimension three reads
$$
 \lim\limits_{r \to \infty } r \left( \frac{\partial}{\partial r } -  i \lambda \right) u_\lambda (r \nu) =0,
$$
uniformly for all directions $\nu \in \mathbb{S}^2$, where $i$ is the imaginary unit.
From (\ref{Helm-sol-Green}) we have
\begin{equation}\label{Helm-3d-on-Gamma}
u_\lambda (x) = \int_{\Gamma}  \frac{e^{i \lambda |x-y|}}{4\pi |x-y|}  \varphi_0(y) d\sigma_\Gamma(y), \qquad x\in \R^3.
\end{equation}
Let us show here how to obtain decay estimates on $u_\lambda$, as $\lambda \to \infty$, in the case when $\Gamma$ is a plane.
Assume $ \nu \in \R^3$ is the unit normal to $\Gamma$, thus $\Gamma= \{ y \in \R^3: y\cdot \nu =0 \}$.
Fix any $3 \times 3$ orthogonal matrix $M$ such that $M e_3 = \nu$, where $e_3=(0,0,1)$.
Next, make a change of variables in (\ref{Helm-3d-on-Gamma}) by the formula $y=M z$, and $x=M w$, where $w, z \in \R^3$. We get
\begin{multline}\label{Helm-reduced-to-0-plane}
u_\lambda (x) =u_\lambda (Mw) =  \int_{ z_3=0 }  \frac{e^{i \lambda |Mw-Mz|}}{4\pi |M w- Mz|}  \varphi_0(M z) d\sigma (z) = \\ \int_{\R^2} 
\frac{e^{i \lambda | w- (z',0)  |}}{4\pi | w- (z',0)|}  \varphi_0(M (z',0)) d z', \qquad x,w \in \R^3.
\end{multline}

Thus the case of an arbitrary plane, by a rotation, is easily reduced to the case of $y_3=0$ in $\R^3$.
Now, if we have a bounded domain $D\subset \R^3$
which stays within a positive distance from $\Gamma$, then the determinant condition (\ref{gamma-is-1}) will be satisfied for all $x\in D$.
Next, using the argument of Theorem \ref{Thm-Helmholtz} as we do for (\ref{Helmholtz-est-gamma-1}),
with $n=3$ and $m=1$ one may show from (\ref{Helm-reduced-to-0-plane}) that 
\begin{equation}
|| u_\lambda ||_{L^2(D)} \lesssim_D \lambda^{- 1} |D|^{1/2}  , 
\end{equation}
where the constant in the inequality depends on the distance
of $D$ and the plane $\Gamma$.
Thus, in this particular case
we get a quantitative decay estimate of the $L^2$-norm of solutions $u_\lambda$ as the eigenvalue tends to infinity.

When we allow $D$ to cross the plane $\Gamma$, the determinant condition in (\ref{gamma-is-1}) becomes
invalid. However, this scenario can be handled by splitting the integral in (\ref{Helm-reduced-to-0-plane})
by means of a smooth cut-off into two regions, where one of them stays away from $\Gamma$, and the other one is in a small neighbourhood of $\Gamma$.
The former will be estimated as above, relying on (\ref{gamma-is-1}), while the latter will be handled using the smallness of the support
of the cut-off and uniform boundedness of $u_\lambda$. Thus, one may obtain decay estimates on $u_\lambda$ in this setting
as well relying on the methods discussed in the paper, however, the rate of decay will be worse than $\lambda^{-1}$. 
In a similar vein we may handle the case when $\Gamma$ is a union of some finite number of planes in $\R^3$,
by applying the analysis on each flat piece of $\Gamma$ separately.
The details are left as an exercise for an interested reader.

It should be noted that at this stage our analysis does not extend to the case of a general hypersurface
and more interestingly to the case of oscillating (rough) boundaries considered in subsection \ref{sec-perturb-surface} and Section \ref{sec-maximal-operator}.
Understanding the properties of the operator (\ref{def-of-T-Helmholtz}) in this generality seems to be a very interesting problem.
Another interesting problem here is to understand if the operators 
$T_\lambda$ with non integer $\gamma$ can be applied to the study of fractional order Helmholtz operators.
It was to our surprise that there were virtually no results in the literature concerning fundamental solutions
of fractional order Helmholtz equations.

\subsubsection{\normalfont{\textbf{The case of $\gamma=2$.}}} We have $\gamma \alpha -1/2 = n-1-1/2 = n-3/2$. For $m<n-3/2$
we have proved that 
\begin{equation}\label{T-lambda-est-gamma-2}
 || T_\lambda ||_{L^2(\R^{n-1}) \mapsto L^2(\R^n)} \leq C \lambda^{-(m/2 + 1/4)}.
\end{equation}
Now set
$$
S_\lambda f(x) = \int_{\R^{n-1}} e^{i \lambda x' \cdot y'} \psi_0(x, y') K( x-(y',0) )  f(y') dy'.
$$
We have
$$
2 x' \cdot y' = 2 x \cdot (y',0) = |x|^2 + |y'|^2  - |x-(y',0)|^2
$$
and
$$
e^{i 2 \lambda x' \cdot y'} = e^{i \lambda |x|^2}  e^{i \lambda |y'|^2} e^{i \lambda |x-(y',0)|^2}.
$$
Combination of this inequality and (\ref{T-lambda-est-gamma-2}) gives
\begin{equation}\label{S-lambda-est-gamma-2-n}
 ||S_\lambda ||_{L^2(\R^{n-1}) \mapsto L^2(\R^n)} \leq C \lambda^{-(m/2+1/4)}.
\end{equation}
On the other hand taking $\beta=1/2$ in Lemmas \ref{Lem-A-lambda} and \ref{Lem-fixed-surface} we obtain
\begin{equation}\label{S-lambda-est-gamma-2-n-1}
 ||S_\lambda ||_{L^2(\R^{n-1}) \mapsto L^2(\R^{n-1})} \leq C \lambda^{-m/2 }.
\end{equation}
It is interesting to observe that we have a better decay in (\ref{S-lambda-est-gamma-2-n}) than in (\ref{S-lambda-est-gamma-2-n-1}).

\subsection{Lower bounds.}\label{subsec-lower-bounds}

Here we construct examples for which the upper bounds
(\ref{S-lambda-est-gamma-2-n}), (\ref{S-lambda-est-gamma-2-n-1}), and the first estimate of Theorem \ref{Thm-Helmholtz}
are attained, showing that these estimates are best possible.

We start with an example showing that $m/2$ in (\ref{S-lambda-est-gamma-2-n-1}) can not be replaced by a larger number.
Take $x_0' \in \R^{n-1}$  and set $F = \{ x' \in \R^{n-1}: \ |x' - x_0'| \leq c_0 \lambda^{-1/2} \}$
where $c_0$ is a small constant, which will be specified in due course. Then choose $y_0' \in \R^{n-1}$ such that $|x_0' - y_0' | = 100 c_0 \lambda^{-1/2}$
and set $E= \{ y' \in \R^{n-1}: \ |y' - y'_0| \leq c_0 \lambda^{-1/2} \}$. We have
$$
x' \cdot y' = (x' - x_0') \cdot (y' - y_0') + (x' - x_0')\cdot y_0'+ x_0' \cdot y'
$$
and
$$
e^{ i \lambda x' \cdot y' } = e^{i \lambda (x'- x_0') \cdot (y'-y_0')} e^{i \lambda (x'- x_0') \cdot y_0'} e^{i \lambda x_0' \cdot y'}.
$$
Setting $f(y') = e^{-i \lambda x_0' \cdot y'} \mathbb{I}_E (y')$ we obtain
\begin{align*}
S_\lambda f(x) = e^{i \lambda (x' - x_0')\cdot y_0'} \int_{\R^{n-1}} e^{i \lambda (x' - x_0') \cdot (y'-y_0')} \psi_0 (x,y')
e^{i\lambda x_0' \cdot y'} K(x-(y',0)) f(y') dy' &= \\
e^{i \lambda (x' - x_0')\cdot y_0'} \int_{\R^{n-1}} \psi_0(x,y') e^{i \lambda (x'-x_0') \cdot (y' - y_0')} K(x-(y',0)) \mathbb{I}_E(y') dy' &= \\
e^{i \lambda (x' - x_0')\cdot y_0'}  \int_E \psi_0(x,y') K(x-(y',0)) dy' &+ \\
e^{i \lambda (x' - x_0')\cdot y_0'} \int_E \psi_0(x,y') \big( e^{i\lambda (x' - x_0')\cdot (y'-y_0')} - 1 \big) K(x-(y',0)) dy' &:= \\ P(x) + R(x),
\end{align*}
where we have
\begin{multline*}
 |R(x)| \leq C \int_E \lambda |x'- x_0'| |y'-y'_0| K(x-(y',0)) dy' \leq \\ C \int_E \lambda c_0 \lambda^{-1/2} c_0 \lambda^{-1/2} K(x-(y',0)) dy'=
C c_0^2 \int_E K(x-(y',0)) dy',
\end{multline*}
for $x'\in F$. We now take $x=(x',0)$ and assume that $\psi_0( (x', 0),y') = 1$ for $x' \in F$ and
$y' \in E$. We then have
$$
|P(x)| = \int_E K(x'-y',0) dy',
$$
and
$$
|R(x)| \leq C c_0^2 \int_E K(x' - y',0) dy', \qquad \text{ for } x' \in F.
$$
Choosing $c_0$ small, for $x' \in F$ we obtain
$$
|S_\lambda f(x',0) | \geq \frac 12 \int_E \frac{1}{|x'-y'|^{n-1-m} } dy' \geq c (\lambda^{-1/2}  )^{n-1} \frac{1}{\lambda^{-(n-1-m)/2}} = c \lambda^{-m/2}.
$$
It follows that
$$
\left( \int_{R^{n-1}}  |S_\lambda f(x',0) |^2 d x' \right)^{1/2} \geq c \lambda^{-m/2} |F|^{1/2}.
$$
But $||f||_2 = |E|^{1/2} = |F|^{1/2}$ and hence
$$
||S_\lambda||_{L^2(\R^{n-1}) \to L^2(\R^{n-1}) } \geq c \lambda^{-m/2}.
$$
Thus we have proved that (\ref{S-lambda-est-gamma-2-n-1}) is sharp.

\bigskip
We shall then prove that (\ref{S-lambda-est-gamma-2-n}) is sharp as well, for which we will
use the construction described above. Define $F_1 : = F \times [0, c_0 \lambda^{-1/2}]$
and assume that $\psi_0 (x, y') = 1$ for $x \in F_1$ and $y' \in E$. Now for $x \in F_1$
we obtain as above that $S_\lambda f(x) = P(x) + R(x)$ where
$$
|P(x)| =\int_E K(x' - y', x_n) dy'.
$$
It follows that for $x \in F_1$ we have
$$
|S_\lambda f(x) | \geq \frac 12 \int_E \frac{1}{\lambda^{-(n-1-m)/2}} dy' \geq c \lambda^{-m/2},
$$
hence
$$
\left(  \int_{\R^n} |S_\lambda f(x)|^2 dx  \right)^{1/2} \geq c \lambda^{-m/2} \lambda^{-n/4},
$$
since $|F_1| = c \lambda^{-n/2}$. But
$$
|| f ||_2 = |E|^{1/2} = c \lambda^{-(n-1)/4}= c \lambda^{-n/4 + 1/4},
$$
and consequently
$$
|| S_\lambda ||_2 \geq c ||f||_2 \lambda^{-m/2 - 1/4},
$$
which shows that 
$$
|| S_\lambda ||_{L^2 (\R^{n-1}) \mapsto L^2(\R^n) }  \geq c \lambda^{-(m/2+1/4)}.
$$
Thus we have proved that (\ref{S-lambda-est-gamma-2-n}) is sharp.

\bigskip

Finally we turn to the first estimate of Theorem \ref{Thm-Helmholtz}.
For $\gamma>1$ we have
$$
T_\lambda f(x) = \int_{\R^{n-1}} e^{ i \lambda | x- (y',0) |^\gamma } \psi_0 (x,y') K(x- (y',0)) f(y') dy'
$$
and we shall estimate the norm of $T_\lambda$ from below. Set $F := B(x_0', c_0 \lambda^{-\beta} )$,
$F_1: = F \times [0, c_0 \lambda^{-\beta}]$, and $E:=  B(y_0', c_0 \lambda^{-\beta})$, where
$|x_0' - y_0'| = 100 c_0 \lambda^{-\beta}$. Here $B(x,R)$ denotes a ball with a center $x$
and radius $R$. We assume that $\psi_0 (x,y') =1 $ for $x\in F_1$ and $y' \in E$.
Letting $x\in F_1$ and $f=\mathbb{I}_E$ we have
$$
T_\lambda f(x) = \int_E K(x-(y',0)) dy' + \int_E \big(  e^{i \lambda |x-(y',0)|^\gamma} -1 \big) K( x-(y',0) ) dy' := P(x) + R(x).
$$
Choosing $\beta=1/\gamma$ for $x \in F_1$ and $y' \in E$ implies
$$
|e^{i \lambda |x-(y',0)|^\gamma} -1 | \leq \lambda  |x-(y',0)|^\gamma  \leq C c_0 \lambda \lambda^{-\beta \gamma} \leq C c_0.
$$
It follows that
$$
|R(x)| \leq C c_0 \int_E K( x-(y',0) ) dy'.
$$
Now taking $c_0$ small we obtain
$$
| T_\lambda f(x) | \geq c \int_E K(x-(y',0)) dy' \geq c \int_E \frac{d y'}{\lambda^{-\beta(n-1-m)}} = C \lambda^{-\beta m}
$$
and
$$
\left( \int_{F_1} | T_\lambda f  |^2 dx \right)^{1/2} \geq c \lambda^{-\beta m} \lambda^{-\beta n/2},
$$
since $|F_1| \geq c \lambda^{-\beta n}$. But $ || f ||_2 = c \lambda^{-\beta(n-1)/2}  = c \lambda^{-\beta n/2}  \lambda^{\beta /2}$  
and hence
$$
|| T_\lambda|| \geq c \lambda^{-\beta m} \lambda^{-\beta /2} = c \lambda^{-\beta (m+1/2)}= c \lambda^{-( m+1/2) /\gamma}.
$$
In Theorem \ref{Thm-Helmholtz} we proved that
$$
|| T_\lambda|| \leq C \lambda^{-(m+1/2)/\gamma},
$$
for $m<\gamma \alpha - 1/2= \gamma n/2 - \gamma/2 - 1/2$
and thus we have proved that this estimate is sharp.

\section*{Acknowledgements}
Part of the paper has been written while all authors met at Institut Mittag-Leffler
during the program ``Homogenization and Random Phenomenon". H.A. thanks The Institue for its support. H.S. was partially supported by the Swedish Research Council.


\begin{thebibliography}{99}

\bibitem{ASS1} H. Aleksanyan, H. Shahgholian, P. Sj\"{o}lin, Applications of Fourier analysis in homogenization of Dirichlet problem I.
Pointwise estimates, J. Differential Equations 254 (6) (2013) 2626-2637.

\bibitem{ASS2} H. Aleksanyan, H. Shahgholian, P. Sj\"{o}lin, Applications of Fourier analysis in homogenization of Dirichlet problem.
$L^p$ estimates, Arch. Ration. Mech. Anal. (ARMA) 215 (1) (2015) 65-87.
 
\bibitem{ASS3} H. Aleksanyan, H. Shahgholian, P. Sj\"{o}lin, Applications of Fourier analysis in homogenization of Dirichlet problem III.
Polygonal domains, J. Fourier Anal. Appl. 20 (3) (2014) 524-546.



\bibitem{KP} C. Kenig, C. Prange, Uniform Lipschitz Estimates in Bumpy Half-Spaces, 
Arch. Ration. Mech. Anal. (ARMA) 216 (3) (2015) 703-765.

\bibitem{Phong-Stein} D. Phong, E. Stein, Hilbert integrals, singular integrals, and Radon transforms I, Acta mathematica 157 (1) (1986) 99-157.
      
\bibitem{Sogge-Stein} C. Sogge, E. Stein, Averages over hypersurfaces: II, Inventiones mathematicae 86 (2) (1986) 233-242. 
      
\bibitem{Stein} E. Stein, Harmonic Analysis: Real-Variable Methods, Orthogonality, and Oscillatory Integrals, Princeton University Press 1993.


\end{thebibliography}
\end{document}